\documentclass[12pt]{amsart}
\usepackage{graphicx,amssymb}
\usepackage{pb-diagram}
\setlength{\textwidth}{16.5cm}
 \setlength{\textheight}{21cm}
 \oddsidemargin -0.2cm
 \evensidemargin -0.2cm
\usepackage[all]{xy}
\usepackage{color}

\DeclareMathOperator{\core}{Core}

\subjclass[2000]{14H30, 14K55, 14D05}
\keywords{Covers, Riemann surfaces, monodromy, automorphisms}

\newtheorem{thm}{Theorem}
\newtheorem{cor}[thm]{Corollary}

\newtheorem{prop}[thm]{Proposition}
\theoremstyle{definition}

\theoremstyle{plain}
\newtheorem*{MT}{Main Theorem}

\theoremstyle{remark}
\newtheorem{rem}[thm]{Remark}



\numberwithin{equation}{section}

\makeindex

\begin{document}

\title{The Monodromy group of $pq$-covers}
\author[A. Carocca]{\'Angel Carocca}
\email{angel.carocca@ufrontera.cl}

\author[R. E. Rodr\'{\i}guez]{Rub\'{\i} E. Rodr\'{\i}guez}
\email{rubi.rodriguez@ufrontera.cl}

\address{Departamento de Matem\'atica y Estad\'{\i}stica, Universidad de La Frontera. Temuco, Chile}

\subjclass[2000]{14E20, 14H37}
\keywords{Covers, Riemann Surfaces, Automorphisms }

\thanks{The authors were partially supported by  Fondecyt Grants 1190991 and 1200608}
\begin{abstract}
In this work we study the monodromy group of  covers $\varphi \circ \psi$ of curves \linebreak $\mathcal{Y}\xrightarrow {\quad {\psi}} \mathcal{X} \xrightarrow {\quad \varphi} \mathbb{P}^{1}$, where $\psi$ is a $q$-fold cyclic \'etale cover and $\varphi$ is a totally ramified $p$-fold cover, with $p$ and $q$ different prime numbers with $p$ odd.

We show that the Galois group $\mathcal{G}$ of the Galois closure $\mathcal{Z}$ of $\varphi \circ \psi$ is of the form 	$ \mathcal{G} = \mathbb{Z}_q^s \rtimes \mathcal{U}$, where $0 \leq s \leq p-1$ and $\mathcal{U}$  is a simple transitive permutation group of degree $p$.  Since the simple transitive permutation group of prime degree $p$ are known, and we construct examples of such covers with these Galois groups, the result is very different from the previously known case when the cover $\varphi$ was assumed to be cyclic, in which case the Galois group is of the form $ \mathcal{G}  = \mathbb{Z}_q^s \rtimes \mathbb{Z}_p$.

Furthermore, we are able to characterize the subgroups $\mathcal{H}$ and $\mathcal{N}$ of $\mathcal{G}$ such that $\mathcal{Y} = \mathcal{Z}/\mathcal{N}$ and $X = \mathcal{Z}/\mathcal{H}$. 
\end{abstract}

\maketitle

\section{Introduction}

Let $\mathcal{X}$ be a compact Riemann surface  of genus $ \; \mathtt{g}_{\mathcal{X}} \; $ and  $  \varphi: \mathcal{X} \rightarrow \mathbb{P}^{1}$  a cover of degree  $d$. In 1874 Klein determined the possible Galois groups for the case of Galois covers with  $ \; \mathtt{g}_{\mathcal{X}} = 0$. In 1926, O. Zariski \cite{Zariski;1978} found  the classification in the case that the  Galois closure  of $  \varphi $ is a Frobenius group and $ \; \mathtt{g}_{\mathcal{X}} =0$.
Since then, many authors have worked on this problem, either imposing special conditions on the cover or fixing $ \; \mathtt{g}_{\mathcal{X}}, \; $     and many results are known; see for instance   \cite{Allcock;Hall;2010, Artebani;Pirola;2005, Guralnick;1995, Guralnick;Shareshian;2007, Magaard;Volklein;2004}.

Now consider $\mathcal{Y}\xrightarrow {\quad {\psi}} \mathcal{X} \xrightarrow {\quad \varphi} \mathbb{P}^{1}$  a sequence of covers of compact Riemann surfaces.
In this direction, a more general problem is studying the Galois closure of the composite cover $\varphi \circ \psi:\mathcal{Y}\rightarrow \mathbb{P}^{1}$.
Some results on this problem, considering special properties of the covers $\varphi$ and $\psi$, can be found for instance in \cite{Arenas;Rojas;2010, Biggers;Fried;1986, CLR, CHR, Carocca;Romero, Diaz;Donagi;1989, Guralnick;Shareshian;2007, Kanev;2006, Vetro;2007}.

The first cases studied were \'etale double  covers $\mathcal{Y}\xrightarrow {\quad {\psi}} \mathcal{X}$ of cyclic covers $\mathcal{X} \xrightarrow {\quad \varphi} \mathbb{P}^{1}$ of prime degree $p$. For $p=2$ Mumford shows in
\cite{m} that $Y \rightarrow \mathbb{P}^1$ is Galois with Galois group the Klein group of order $4$, and the Prym variety $P(\mathcal{Y}/\mathcal{X})$ is isomorphic as a principally polarized abelian variety to a Jacobian or a product of two Jacobians. The case  $p=3$  corresponds to the well known Recillas trigonal construction \cite{sr}, where the Galois group for the Galois closure of an \'etale double cover of a  cyclic trigonal curve is
the alternating group $\mathbf{A}_4$, and the Prym variety $P(\mathcal{Y}/\mathcal{X})$ is isomorphic as a principally polarized abelian variety to the Jacobian of a tetragonal curve.

These results were extended in \cite{CLR} for any prime $p \geq 5$, where it is shown that the Galois group of the Galois closure of an \'etale double cover of a cyclic cover of degree $p$ is of the form $ \mathcal{G} \cong \mathbb{Z}_2^{p-1} \rtimes \mathbb{Z}_p$.

The Galois group of the Galois closure of \'etale cyclic $q$-fold covers  of cyclic $p$-fold covers was found in \cite{CHR}, where $q$ and $p$ are different primes with $p$ odd and $(p,q-1)=1$; the Galois group in this case is $ \mathcal{G} \cong \mathbb{Z}_q^{s} \rtimes \mathbb{Z}_p$, with $2 \leq s \leq p-1$ and $q^s \equiv 1 \textup{ mod } p$.

In this paper we compute the Galois group $ \; \mathcal{G} \; $ of the Galois closure of the composite cover $\varphi \circ \psi $ when  $ {{\psi}} $ is  a $q$-fold cyclic \'etale cover and ${ \varphi}$ is a totally ramified covering of degree $\; p \; $, for any prime numbers $q \ne p$ and $p \geq 3$. As we will see, the answer in this case gives many more possibilities for the Galois group, including non solvable ones.

\begin{MT}  \label{thm:MT}
	Let $p$ and $q$ be different prime numbers with $p$ odd, and consider  the composite cover  $\mathcal{Y}\xrightarrow {\quad {\psi}} \mathcal{X} \xrightarrow {\quad \varphi} \mathbb{P}^{1}$ where $\psi$ is a $q$-fold cyclic \'etale cover and $\varphi$ is a totally ramified $p$-fold cover.
	
	Let $ \; \widetilde{\varphi} \; : \mathcal{Z} \to \mathbb{P}^{1}$ be the Galois closure of the composite cover $\varphi\circ\psi \; $ and  denote by  $\mathcal{G}$  the corresponding Galois group  of $\widetilde{\varphi}$, with $\mathcal{N} \leq \mathcal{H} \leq \mathcal{G}$ the (conjugacy classes of) subgroups
	such that $\mathcal{Z}/\mathcal{N} = \mathcal{Y}$ and $\mathcal{Z}/\mathcal{H} = \mathcal{X}$.  Also denote by $\mathcal{K} := \mathcal{H}_{\mathcal{G}} = \core_{ \mathcal{G}}(\mathcal{H})$.

Then
$$
\mathcal{G} \cong \mathcal{K} \rtimes \mathcal{U} \, ,
$$
where $\mathcal{K} \cong \mathbb{Z}_q^s$ with $0 \leq s \leq p-1$, and $\mathcal{U}$ is a simple transitive permutation group of prime degree $ \; p$.

\medskip

Furthermore,
\begin{enumerate}
	\item[i)] The group $\mathcal{G}$ is solvable if and only if $\mathcal{H}$ is a normal subgroup of $\mathcal{G}$ if and only if $\mathcal{X} \xrightarrow {\quad \varphi} \mathbb{P}^{1}$ is a cyclic $p$-fold cover.
	In this case $\mathcal{U} \cong \mathbb{Z}_p$, $\mathcal{H}= \mathcal{K} \cong \mathbb{Z}_q^s$ with some $s$ such that $1 \leq  s \leq p-1$, and $\mathcal{N}$ is a maximal subgroup of $\mathcal{H}$ with $\mathcal{N}_{\mathcal{G}} = \{1\}$.

\medskip

     In particular, the original cover $\varphi\circ\psi \; $  is Galois if and only if   $\mathcal{N} =\{1\}$  if and only if   
    $$\mathcal{G} = \langle a, b : a^q = b^p =1 , b^{-1}ab = a^r\rangle \cong \mathbb{Z}_q \rtimes \mathbb{Z}_p \ ,
    $$
    with  $p/(q-1)$ and  $1 \ne r$ a primitive $p$-th root of unity in the field of $q$ elements.

\medskip

   \item[ii)] Otherwise,  $\mathcal{K} \cong \mathbb{Z}_q^s$ with $0 \leq s \leq p-1$ and $\mathcal{U}$ is a simple non-abelian group in the following list.

\begin{enumerate}
	\item  $ \; \mathcal{U}  = {\mathbf A}_p \; $;   $\mathcal{H} \cap \mathcal{U} =  {\mathbf A}_{p-1}$; $p \geq 5$;  \\
	\item  $ \; \mathcal{U} = \mathbf{PSL}(2, 11) \; $  with $ \; p = 11 \; $;  $\mathcal{H} \cap \mathcal{U} =  \mathbf{A}_5$;\\
	\item  $ \; \mathcal{U} = {\mathbf M}_{11} \; $ with $ \; p = 11 \; $; $\mathcal{H} \cap \mathcal{U} =  \mathbf{M}_{10}$;\\
	\item  $  \; \mathcal{U} = {\bf M}_{23} \; $  with $ \; p = 23 \; $; $\mathcal{H} \cap \mathcal{U} = \mathbf{M}_{22}$;\\
	\item  $ \; \mathcal{U}  = \mathbf{PSL}(n,  \mathfrak{q}) \; $,  where $ \displaystyle\frac{\mathfrak{q}^n - 1}{\mathfrak{q} - 1}  = p $, $ \; n \; $ is  prime and $\mathfrak{q}$ is a power of a prime.  $\mathcal{H} \cap \mathcal{U}$ is the stabilizer of a point or a hyperplane of $ \; {\mathbb F}_{\mathfrak{q}}^n$.\\
\end{enumerate}

\medskip

In this case $\mathcal{H}  = \mathcal{K} \rtimes (\mathcal{H} \cap \mathcal{U})$ with $\mathcal{H} \cap \mathcal{U}$ a subgroup of index $p$ in $\mathcal{U}$ and
$\mathcal{N}$ is a maximal subgroup of $\mathcal{H}$ with $\mathcal{N}_{\mathcal{G}} = \{1\}$.

\bigskip

Furthermore, if $\mathcal{K} = \{1\}$ (that is, if $s=0$)  or if $ \;  \mathcal{G}  = {\mathcal N} \: \mathcal{U}$ then $\mathcal{U}$ is one of the following groups:

\begin{enumerate}
	\item[(f)]  $ \;\mathcal{U} = {\bf A}_5 \; $ with $p=5, \; q = 3$;\\
	\item[(g)]  $ \; \mathcal{U} = {\bf M}_{11} \; $ with $p=11, \; q = 2$;\\
	\item[(h)]  $ \; \mathcal{U}  = {\bf PSL}(n,  \mathfrak{q}) \; $  with  $ \; p = \displaystyle\frac{\mathfrak{q}^n - 1}{\mathfrak{q} - 1} \; $  where $ \; n \; $  is prime and such that
	\begin{itemize}
		\item if $ \;  \mathfrak{q}  > 2,  \; \; $ then $ \; q \; $ is any prime divisor of  $ \; \mathfrak{q} -1$;
		\item if $ \; \mathfrak{q} = 2 \; \; $ then $ \; n = 3  \; $, $p=7$  and  $ \; q = 2$.
	\end{itemize}
\end{enumerate}

  \end{enumerate}
\end{MT}

\medskip

This result will be proved in several steps as follows: in Section 2 we fix the notation and recall general results on group actions on curves and some properties of the Galois closure of covers of curves that will be used later. In Section 3 we establish the structure of the Galois group $\mathcal{G}$ of the Galois closure of our cover $\varphi \circ \psi$; equivalently, of the Monodromy representation of the cover $\varphi\circ\psi: \mathcal{Y}\rightarrow \mathcal{X} \rightarrow \mathbb{P}^{1}$,  showing that $\mathcal{G} \cong \mathcal{K} \rtimes \mathcal{U}$, where $\mathcal{K} \cong \mathbb{Z}_q^s$ with $0 \leq s \leq p-1$, and $\mathcal{U}$ is a simple transitive permutation group of prime degree $ \; p$.  In Section 4 we study the case  when $\mathcal{X} \rightarrow \mathbb{P}^{1}$ is a cyclic cover, and show that this is equivalent to $\mathcal{G}$ being solvable and also equivalent to $\mathcal{U} \cong \mathbb{Z}_p$. In Section 5 we study the non-solvable case, showing part ii) in the Theorem. Finally, in Section 6 we give several examples to illustrate.

\section{Preliminaries}\label{sec:0}
\subsection{Group actions on compact Riemann Surfaces}\label{sec:0.1}
In order to fix the notation, we start by recalling some basic properties of group actions on compact Riemann surfaces.

Let $\; \mathcal{X} \;$ be a compact Riemann surface of genus $ \; \mathtt{g}_{\mathcal{X}} \; $ and $G$ a finite group acting
on $\; \mathcal{X}.
 \;$ The quotient projection
$\;\mathcal{X} \rightarrow \mathcal{X}_{G}\;$ is a branched cover, which may be partially characterized
by a vector of numbers $\;(\gamma ; m_{1}, \cdots , m_{r})\;$ where $\;\gamma \; $ is the genus of $\; \mathcal{X}_{G} := \mathcal{X}/{G}, $ the integer  $ \; 0 \leq r \leq 2 \mathtt{g}_{\mathcal{X}}  + 2 \;$ is the number of branch points of the cover and the integers  $ \; m_{j} \; $
are the orders of (representatives of the conjugacy classes of) the cyclic subgroups $ \; G_j \; $ of $ \; G \; $ which fix points on $ \; \mathcal{X}$. We call $(\gamma ; m_{1}, \cdots , m_{r})$ the \textit{branching data} of $G$ on $ \; \mathcal{X}$. These numbers satisfy the Riemann-Hurwitz equation
\begin{equation}\label{rh}
\frac{2(\mathtt{g}_{\mathcal{X}} -1)}{\vert G\vert }=2(\gamma-1)+\sum_{j=1}^{r}\left(1-\frac{1}{m_{j}}\right) .
\end{equation}
\noindent A $(2\gamma+r)-$tuple $\left(a_{1},\cdots, a_{\gamma},b_{1},\cdots,b_{\gamma}, c_{1},\cdots,c_{r}\right)$ of elements of $ \; G \; $ is called a \textit{generating vector of type } $(\gamma;m_{1},\cdots,m_{r})$  if
$$\label{gvector}
 G = \left\langle a_{1},\cdots, a_{\gamma},b_{1},\cdots,b_{\gamma}, c_{1},\cdots,c_{r} \; \; / \;  \prod_{i=1}^{\gamma}[a_{i},b_{i}]\prod_{j=1}^{r}c_{j} = 1 \; ,  \; \vert c_{j}\vert  = m_{j} \;  \mbox {for} \; j =1,...,r \; , \; {\mathcal R}  \right\rangle\\
$$
where $[a_i,b_i]=a_ib_ia_i^{-1}b_i^{-1} \; $ and $ \; {\mathcal R} \; $ is a set of appropriate relations on  the elements $ \; \{ a_{1},\cdots, a_{\gamma},b_{1},\cdots,b_{\gamma}, c_{1},\cdots,c_{r}\}.$
\vspace{2mm}\\
Riemann's Existence Theorem then tells us that (see \cite{Broughton;1991})

\begin{thm}  \label{thm:geomgens}
The group $G$ acts on a surface $\; \mathcal{X} \; $ of genus $\; \mathtt{g}_{\mathcal{X}}$ with branching data $(\gamma;m_{1},\cdots,m_{r})$ if and only if $G$ has a generating vector of type $(\gamma;m_{1},\cdots,m_{r})$ satisfying the Riemann-Hurwitz formula \eqref{rh}.
\end{thm}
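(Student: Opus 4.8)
The plan is to prove the two implications by passing between the topological and holomorphic categories via covering space theory over the punctured base, invoking Riemann's existence theorem for the final realization step. For the forward direction, suppose $G$ acts holomorphically on $\mathcal{X}$ with quotient $\mathcal{X}/G$ of genus $\gamma$ and branch points $y_1,\dots,y_r$. Deleting these branch points from the base and their full preimage from $\mathcal{X}$ produces an unramified Galois covering $\mathcal{X}^\ast \to \mathcal{Y}^\ast$ with deck group $G$. The fundamental group of the $r$-times punctured genus-$\gamma$ surface $\mathcal{Y}^\ast$ has the classical presentation
$$
\pi_1(\mathcal{Y}^\ast) = \left\langle \alpha_1,\beta_1,\dots,\alpha_\gamma,\beta_\gamma,\delta_1,\dots,\delta_r \;\middle/\; \prod_{i=1}^\gamma[\alpha_i,\beta_i]\prod_{j=1}^r\delta_j = 1\right\rangle,
$$
where each $\delta_j$ is a small loop encircling the puncture $y_j$. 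The connected covering is classified by a surjection $\theta:\pi_1(\mathcal{Y}^\ast)\to G$, and I set $a_i=\theta(\alpha_i)$, $b_i=\theta(\beta_i)$, $c_j=\theta(\delta_j)$. Surjectivity gives that these elements generate $G$, the relation in $\pi_1(\mathcal{Y}^\ast)$ forces $\prod[a_i,b_i]\prod c_j=1$, and the local analysis around $y_j$ identifies $c_j$ with a generator of the cyclic stabilizer of a point in the fibre, whence $|c_j|=m_j$. Thus this tuple is a generating vector of type $(\gamma;m_1,\dots,m_r)$, and equation \eqref{rh} holds because it is precisely the Riemann--Hurwitz formula for the branched covering $\mathcal{X}\to\mathcal{X}/G$.

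For the converse I would reverse this construction. Fix a compact Riemann surface $\mathcal{Y}_0$ of genus $\gamma$ with marked points $y_1,\dots,y_r$, and let $\mathcal{Y}^\ast$ be its complement. A generating vector $(a_1,\dots,b_\gamma,c_1,\dots,c_r)$ of type $(\gamma;m_1,\dots,m_r)$ determines, through the presentation above, a homomorphism $\theta:\pi_1(\mathcal{Y}^\ast)\to G$: the defining long relation guarantees that $\theta$ is well defined, and the generation hypothesis that it is surjective. Taking the covering associated with $\ker\theta$ yields a connected unramified Galois covering $\mathcal{X}^\ast\to\mathcal{Y}^\ast$ with group $G$. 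Over a punctured disk around $y_j$ the monodromy is the cyclic group $\langle c_j\rangle$ of order $m_j$, so the preimage consists of $|G|/m_j$ punctured disks, each mapping by $z\mapsto z^{m_j}$; filling in one point in each compactifies $\mathcal{X}^\ast$ to a compact surface $\mathcal{X}$ carrying a branched $G$-covering $\mathcal{X}\to\mathcal{Y}_0$. Pulling back the complex structure of $\mathcal{Y}_0$ through this map --- holomorphic away from the ramification and modelled by $z\mapsto z^{m_j}$ over it --- makes $\mathcal{X}$ a Riemann surface on which $G$ acts by biholomorphisms with the prescribed branching data, and its genus equals $\mathtt{g}_{\mathcal{X}}$ because the hypothesis includes \eqref{rh}.

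The main obstacle is the local-to-global matching at the branch points: one must check carefully, in both directions, that the conjugacy class and order of the monodromy generator $c_j$ coincide with the cyclic stabilizer and its ramification index $m_j$, and that the compactification $\mathcal{X}$ of $\mathcal{X}^\ast$ carries a genuine holomorphic, and not merely topological, structure. This second issue is exactly where Riemann's existence theorem is needed, ensuring that the topologically constructed branched cover is realized holomorphically; I would invoke the standard account of this machinery, as in \cite{Broughton;1991}, rather than reprove uniformization here.
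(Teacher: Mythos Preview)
Your sketch is the standard covering--space argument for Riemann's Existence Theorem and is essentially correct. Note, however, that the paper does not give its own proof of this statement at all: it is stated as a known result and attributed to \cite{Broughton;1991}, so there is no in-paper argument to compare against. Your outline is exactly the kind of proof one finds in that reference, so in effect you have supplied what the paper merely cites.
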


We will use this result in constructing examples in Section  \ref{S:ns}.

\subsection{Some properties of the Galois closure of   covers}\label{sec:0.2}
In this section we recall some properties of the Galois group of the Galois closure of a cover of curves.
\vspace{2mm}\\
Let $\mathcal{X} \; $ be a compact Riemann surface  and  $ \; \varphi: \mathcal{X}\rightarrow \mathbb{P}^{1}$  a cover of degree  $p. $ The \textit{Galois closure of $ \; \varphi \; $} is a Galois cover $ \; \widehat{\varphi}:\widehat{\mathcal{X}}\rightarrow \mathbb{P}^{1} \; $ of smallest possible degree such that there exists a sequence of holomorphic maps $ \; \widehat{\mathcal{X}}\xrightarrow {\quad \widehat{\psi}} \mathcal{X} \xrightarrow {\quad \varphi} \mathbb{P}^{1} \; $
with $ \; \varphi\circ \widehat{\psi}=\widehat{\varphi}$. Let $ \; {\mathbb C}({\mathcal X}) \; $  be the field of meromorphic functions on $ \; {\mathcal X}. \; $  The Galois group of the cover $ \; \varphi \; $ is the Galois group associated to the Galois closure of the field extension $ \; {\mathbb C}({\mathcal X})/{\mathbb C}({\mathbb P}^1).\; $
An elementary property of the Galois group of the cover $ \; \varphi: \mathcal{X}\rightarrow \mathbb{P}^{1}$  is that it has a natural representation  as a transitive subgroup of the symmetric group $\textbf{S}_{p}$.
\vspace{2mm}\\
Now consider a Galois cover of degree $q$
$$
 \psi:\mathcal{Y}\rightarrow \mathcal{X}  .
$$
We denote by $ \; \widetilde{\varphi} \; : \mathcal{Z} \to \mathbb{P}^{1}$  the Galois closure of the composite cover $\varphi\circ\psi \; $ and  by $\mathcal{G}$ the corresponding Galois group  of $\widetilde{\varphi}$. Hence $\mathcal{G}$  has a natural representation  as a transitive subgroup of the symmetric group $\textbf{S}_{pq}$.
\vspace{2mm}\\
Some well known properties of the Galois closure are collected next, adapted to our situation.  From now on, $ \;  \mathcal{Z}_{\mathcal{L}} \; $ denotes the quotient of $ \;  \mathcal{Z} \; $ by the subgroup $ \; \mathcal{L} \leq \mathcal{G}$.

\begin{prop} \label{prop:Gclosure}
Let $\mathcal{Z}$ be the Riemann surface associated to $\widetilde{\varphi}. \; $ Then there are subgroups $\mathcal{N}$ and  $\mathcal{H}$ of $\mathcal{G}$ satisfying the following properties:
 \begin{enumerate}
 \item  $\mathcal{Z}_{\mathcal{N}}\cong \mathcal{Y},\:\: \mathcal{Z}_{\mathcal{H}}\cong \mathcal{X} \; $ and  $ \; \mathcal{Z}_{\mathcal{G}}\cong\mathbb{P}^{1}$.
 \medskip
 \item  $\mathcal{N}_{\mathcal{G}}=\{1\}$,   where $ \; \mathcal{N}_{\mathcal{G}} = \core_{\mathcal{G}}(\mathcal{N}). \; $ In particular, if $ \; \mathcal{N} \unlhd G, \; $ then $ \; \mathcal{N} = \{ 1 \} \; $ and
 $\mathcal{Z}\cong \mathcal{Y}$.
 \medskip
  \item  $ \; \vert \mathcal{G} : \mathcal{H} \vert = p. \; $
 \medskip
 \item If $ \; \mathcal{K} := \mathcal{H}_{\mathcal{G}}, \; $ then $\mathcal{G}/\mathcal{K}\; $ is isomorphic to a transitive subgroup of $ \; \textbf{S}_p$. Also, $\mathcal{Z}_{\mathcal{K}} \to \mathcal{Z}_{\mathcal{G}}= \mathbb{P}^1$ is the Galois closure of the cover $\mathcal{X} \to \mathbb{P}^1$, with Galois group $\mathcal{G}/\mathcal{K}$ and $\mathcal{X} = \mathcal{Z}_{\mathcal{K}}/(\mathcal{H}/\mathcal{K})$.
  \medskip
\item $ \; \mathcal{N} \unlhd \mathcal{H} \; \; $ and $ \; \; \vert \mathcal{H} : \mathcal{N} \vert = q.$
 \end{enumerate}
\end{prop}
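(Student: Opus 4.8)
The plan is to translate everything into the Galois correspondence for the extension $\mathbb{C}(\mathcal{Z})/\mathbb{C}(\mathbb{P}^1)$, using the standard anti-equivalence between connected intermediate covers and subgroups of $\mathcal{G} = \Gal(\mathbb{C}(\mathcal{Z})/\mathbb{C}(\mathbb{P}^1))$. The sequence $\mathcal{Y} \to \mathcal{X} \to \mathbb{P}^1$ induces inclusions $\mathbb{C}(\mathbb{P}^1) \subseteq \mathbb{C}(\mathcal{X}) \subseteq \mathbb{C}(\mathcal{Y}) \subseteq \mathbb{C}(\mathcal{Z})$, so I would define $\mathcal{H} := \Gal(\mathbb{C}(\mathcal{Z})/\mathbb{C}(\mathcal{X}))$ and $\mathcal{N} := \Gal(\mathbb{C}(\mathcal{Z})/\mathbb{C}(\mathcal{Y}))$. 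Then $\mathcal{N} \leq \mathcal{H} \leq \mathcal{G}$, and passing to fixed fields recovers $\mathcal{Z}_{\mathcal{N}} \cong \mathcal{Y}$, $\mathcal{Z}_{\mathcal{H}} \cong \mathcal{X}$ and $\mathcal{Z}_{\mathcal{G}} \cong \mathbb{P}^1$, which is property (1). Both subgroups are only defined up to conjugacy, reflecting the choice of embedding of function fields, as in the statement.

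For the index statements I would use that, under the Galois correspondence, the index of a subgroup equals the degree of the corresponding extension over the base field. Thus $|\mathcal{G}:\mathcal{H}| = [\mathbb{C}(\mathcal{X}):\mathbb{C}(\mathbb{P}^1)] = \deg \varphi = p$, giving (3), and $|\mathcal{H}:\mathcal{N}| = [\mathbb{C}(\mathcal{Y}):\mathbb{C}(\mathcal{X})] = \deg \psi = q$. Since $\psi$ is a cyclic, hence Galois, cover of degree $q$, the extension $\mathbb{C}(\mathcal{Y})/\mathbb{C}(\mathcal{X})$ is Galois, which forces $\mathcal{N} \unlhd \mathcal{H}$ with $\mathcal{H}/\mathcal{N} \cong \mathbb{Z}_q$; this is (5).

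The heart of the argument is the identification of the two normal cores with Galois closures of intermediate fields. For (2), I would use that $\mathcal{N}_{\mathcal{G}} = \core_{\mathcal{G}}(\mathcal{N})$ is exactly the subgroup whose fixed field is the normal closure of $\mathbb{C}(\mathcal{Y})$ over $\mathbb{C}(\mathbb{P}^1)$ inside $\mathbb{C}(\mathcal{Z})$, i.e. the compositum of all $\mathcal{G}$-conjugates of $\mathbb{C}(\mathcal{Y})$. But $\mathcal{Z} \to \mathbb{P}^1$ is by construction the Galois closure of $\varphi \circ \psi$, so $\mathbb{C}(\mathcal{Z})$ already equals this normal closure; hence its fixing group is trivial and $\mathcal{N}_{\mathcal{G}} = \{1\}$. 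The ``in particular'' clause is then immediate: if $\mathcal{N} \unlhd \mathcal{G}$ then $\mathcal{N} = \mathcal{N}_{\mathcal{G}} = \{1\}$, so $\mathcal{Z} = \mathcal{Z}_{\{1\}} = \mathcal{Z}_{\mathcal{N}} \cong \mathcal{Y}$. For (4), I set $\mathcal{K} := \mathcal{H}_{\mathcal{G}}$, the kernel of the permutation action of $\mathcal{G}$ on the cosets $\mathcal{G}/\mathcal{H}$; since this action is transitive of degree $|\mathcal{G}:\mathcal{H}| = p$ by (3), the quotient $\mathcal{G}/\mathcal{K}$ embeds as a transitive subgroup of $\mathbf{S}_p$. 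The fixed field $\mathbb{C}(\mathcal{Z}_{\mathcal{K}})$ is the normal closure of $\mathbb{C}(\mathcal{X})$ inside $\mathbb{C}(\mathcal{Z})$, so $\mathcal{Z}_{\mathcal{K}} \to \mathbb{P}^1$ is Galois with group $\mathcal{G}/\mathcal{K}$, and since $\mathcal{K} \unlhd \mathcal{G}$ with $\mathcal{K} \leq \mathcal{H}$, the further quotient gives $\mathcal{Z}_{\mathcal{K}}/(\mathcal{H}/\mathcal{K}) = \mathcal{Z}_{\mathcal{H}} = \mathcal{X}$.

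I do not expect a genuine obstacle, as the statement packages standard facts about Galois closures of covers; the only point needing care is the final identification in (4) — that the core $\mathcal{H}_{\mathcal{G}}$ produces precisely the Galois closure $\widehat{\mathcal{X}}$ of $\varphi$ and nothing larger. This is where the chain $\mathbb{C}(\mathcal{X}) \subseteq \mathbb{C}(\mathcal{Y}) \subseteq \mathbb{C}(\mathcal{Z})$ must be used: the inclusion $\mathbb{C}(\mathcal{X}) \subseteq \mathbb{C}(\mathcal{Y})$ guarantees that the normal closure of $\mathbb{C}(\mathcal{X})$ is contained in that of $\mathbb{C}(\mathcal{Y})$, which is all of $\mathbb{C}(\mathcal{Z})$, so that $\mathcal{Z}_{\mathcal{K}}$ indeed realizes $\widehat{\mathcal{X}}$ and the coset action describes the monodromy of $\varphi$.
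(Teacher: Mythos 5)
Your proposal is correct, and in fact the paper offers no proof of this proposition at all: it is introduced with the phrase ``some well known properties of the Galois closure are collected next, adapted to our situation,'' so your Galois-correspondence argument is precisely the standard reasoning the authors leave implicit. Every step checks out — defining $\mathcal{H}$ and $\mathcal{N}$ as the fixing groups of $\mathbb{C}(\mathcal{X})$ and $\mathbb{C}(\mathcal{Y})$ (well defined up to conjugacy, matching the statement), reading off the indices from the degrees of $\varphi$ and $\psi$, getting $\mathcal{N} \unlhd \mathcal{H}$ from $\psi$ being Galois, and identifying the fixed field of a core $\core_{\mathcal{G}}(\mathcal{L})$ with the compositum of the $\mathcal{G}$-conjugates of the fixed field of $\mathcal{L}$, which together with minimality of the Galois closure gives $\mathcal{N}_{\mathcal{G}}=\{1\}$ and realizes $\mathcal{Z}_{\mathcal{K}}$ as the Galois closure of $\varphi$ with monodromy group $\mathcal{G}/\mathcal{K} \leq \mathbf{S}_p$ acting transitively on the $p$ cosets of $\mathcal{H}$. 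Your one flagged worry in (4) is actually not an obstacle at all: any Galois extension of $\mathbb{C}(\mathbb{P}^1)$ containing $\mathbb{C}(\mathcal{X})$ contains its Galois closure, so the chain $\mathbb{C}(\mathcal{X})\subseteq\mathbb{C}(\mathcal{Y})\subseteq\mathbb{C}(\mathcal{Z})$ is not even needed there, only the normality of $\mathbb{C}(\mathcal{Z})/\mathbb{C}(\mathbb{P}^1)$.
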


\begin{rem} \label{monodromia}
	The Monodromy representation of the cover $\varphi\circ\psi: \mathcal{Y}\rightarrow \mathcal{X} \rightarrow \mathbb{P}^{1} \;  $ is the natural group homomorphism $ \; \rho: \Pi_{1}(\mathbb{P}^1\setminus B, x_0) \rightarrow  \textbf{S}_{pq}\; $ with transitive image (isomorphic to $\mathcal{G}$) in  $ \;  \textbf{S}_{pq}.\; $
	It is well known that this representation is equivalent to the permutational representation given by the action of $\mathcal{G}$ on the right cosets of $ \; \mathcal{N} \; $ in $\mathcal{G}. \; $
	\vspace{2mm}\\
	We
	will use the same letters  $\mathcal{G}$, $\mathcal{H}$  and  $\mathcal{N}$  to denote  their corresponding images in $\textbf{S}_{pq}$.
	
	\bigskip
	
	The following diagram illustrates the relationship between covers and subgroups,
	{\tiny{
			$$
			\dgHORIZPAD=1em
			\dgVERTPAD=1.8ex
			\divide\dgARROWLENGTH by4
			\begin{diagram}
				\node{\mathcal{Z}}\arrow{sse,t}{\widehat{\psi}}\arrow[4]{s,l}{\widetilde{\varphi}}\arrow[2]{se}\node{}\node{}\node{\ker(\rho)}\arrow[4]{s}\arrow[2]{se}\arrow{sse}\node{}\node{}\node{1}\arrow[4]{s}\arrow{sse}\arrow[2]{se}\\
				\node{}\node{}\\
				\node{}\node{\mathcal{Y} \cong \mathcal{Z}_{\mathcal{N}}}\arrow{s,b}{\psi}\arrow{s,t}{q}\node{\mathcal{Z}_{\mathcal{K}}}\arrow{sw}\node{}\node{\rho^{-1}(\mathcal{N})}\arrow{s}\node{\rho^{-1}(\mathcal{K})}\arrow{sw}\node{}\node{\mathcal{N}}\; \; \; \; \; \arrow{s,r}{q}\node{\:\:\:\; \; \; \mathcal{K}={\mathcal H}_{\mathcal{G}}}\arrow{sw,r}{}\\
				\node{}\node{\mathcal{X}\cong \mathcal{Z}_{{\mathcal H}}}\arrow{sw,b}{\varphi}\arrow{sw,t}{p}\node{}\node{}\node{\rho^{-1}({\mathcal H})}\arrow{sw}\node{}\node{}\node{{\mathcal H}}\arrow{sw,b}{p}\\
				\node{\mathbb{P}^{1}=\mathcal{Z}_{\mathcal{G}}}\node{}\node{}\node{\Pi_{1}(\mathbb{P}^1\setminus B, x_0)}\arrow[3]{e,t}{\rho}\node{}\node{}\node{\mathcal{G}\leq \textbf{S}_{pq}}
			\end{diagram}$$}}
\end{rem}

\begin{prop}   \label{prop:kgi}
Let $ \; \{g_1, g_2, \dots , g_r \} \; $ be a set of geometric generators of  $\mathcal{G}$ given by  its action on $\mathcal{Z} \; $ and $ \; G_i = \langle g_i \rangle. \; $
 If the cover $ \; \psi:\mathcal{Y}\rightarrow \mathcal{X} \; $ is unramified, then $ \; \mathcal{K}\cap G_i= \{ 1 \}, \; $  for all  $ \;
 \; \; 1 \leq i \leq r  \; $.
\end{prop}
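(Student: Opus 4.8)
The plan is to reinterpret the statement $\mathcal{K}\cap G_i=\{1\}$ entirely in terms of the two permutation representations attached to the closure: the action of $\mathcal{G}$ on the cosets $\mathcal{G}/\mathcal{N}$ (the monodromy of $\varphi\circ\psi$, which is faithful since $\mathcal{N}_{\mathcal{G}}=\{1\}$ by Proposition \ref{prop:Gclosure}), and the action on $\mathcal{G}/\mathcal{H}$ (the monodromy of $\varphi$, whose kernel is exactly $\mathcal{K}=\core_{\mathcal{G}}(\mathcal{H})$). Since $G_i=\langle g_i\rangle$ is cyclic, its image in $\mathcal{G}/\mathcal{K}$ is $\langle\bar{g}_i\rangle$ and $|\mathcal{K}\cap G_i|=|g_i|/|\bar{g}_i|$, so the claim is equivalent to $|g_i|=|\bar{g}_i|$. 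The first step is therefore to express both orders as least common multiples of cycle lengths: by faithfulness on $\mathcal{G}/\mathcal{N}$, $|g_i|$ is the lcm of the lengths of the cycles of $g_i$ on $\mathcal{G}/\mathcal{N}$, while, because $\mathcal{K}$ is precisely the kernel of the action on $\mathcal{G}/\mathcal{H}$, the order $|\bar{g}_i|$ is the lcm of the lengths of the cycles of $g_i$ on $\mathcal{G}/\mathcal{H}$.

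The heart of the argument is then the bookkeeping linking these two cycle decompositions through the hypothesis that $\psi$ is unramified. Here $g_i$ is the inertia generator over a branch point $b_i$ of $\widetilde{\varphi}$, and the branch loci of $\widetilde{\varphi}$, of $\varphi\circ\psi$ and of $\varphi$ all coincide because $\psi$ is \'etale. Under the standard monodromy--ramification dictionary, the $G_i$-orbits on $\mathcal{G}/\mathcal{H}$ correspond to the points of $\mathcal{X}$ over $b_i$, the length of an orbit being the ramification index of the corresponding point, and likewise the $G_i$-orbits on $\mathcal{G}/\mathcal{N}$ correspond to the points of $\mathcal{Y}$ over $b_i$. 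The projection $\mathcal{G}/\mathcal{N}\to\mathcal{G}/\mathcal{H}$ is $q$-to-one (as $|\mathcal{H}:\mathcal{N}|=q$ by Proposition \ref{prop:Gclosure}) and realizes $\psi$. I would then observe that, for an orbit $O\subseteq\mathcal{G}/\mathcal{H}$ of length $e$ (a point $x\in\mathcal{X}$ of ramification index $e$), any $G_i$-orbit $O'$ lying above $O$ has length equal to the ramification index over $b_i$ of the corresponding point of $\mathcal{Y}$, so the ramification index of $\psi$ at that point is $|O'|/e$; since $\psi$ is unramified this ratio is $1$, forcing $|O'|=e$. As the preimage of $O$ has $eq$ elements, it then splits into exactly $q$ orbits, each of length $e$. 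Hence the multiset of cycle lengths of $g_i$ on $\mathcal{G}/\mathcal{N}$ is exactly $q$ copies of the multiset of its cycle lengths on $\mathcal{G}/\mathcal{H}$.

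In particular the two multisets have the same underlying set of lengths, so their least common multiples agree; by the first paragraph this yields $|g_i|=|\bar{g}_i|$, hence $\mathcal{K}\cap G_i=\{1\}$ for every $i$. I expect the only delicate point to be the orbit--ramification bookkeeping of the middle step, and in particular making sure that unramifiedness of $\psi$ is invoked at every point over every $b_i$ (equivalently, for every conjugate of $G_i$) rather than for a single orbit. As a guide and consistency check one may keep in mind the geometric reformulation: since $\mathcal{K}\trianglelefteq\mathcal{G}$, the cover $\mathcal{Z}\to\mathcal{Z}_{\mathcal{K}}$ is Galois with group $\mathcal{K}$, and $|\mathcal{K}\cap G_i|$ is precisely its ramification index over $b_i$, while $\mathcal{Z}_{\mathcal{K}}\to\mathbb{P}^{1}$ is the Galois closure of $\varphi$ (Proposition \ref{prop:Gclosure}); the assertion $\mathcal{K}\cap G_i=\{1\}$ thus says that $\mathcal{Z}\to\mathcal{Z}_{\mathcal{K}}$ is \'etale, which is exactly what one expects upon building the closure of $\mathcal{Y}\to\mathbb{P}^{1}$ from the \'etale cover $\psi$ and its conjugates over $\mathcal{Z}_{\mathcal{K}}$.
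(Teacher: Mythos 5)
Your proof is correct, but note that there is no in-text argument to compare it with: the paper's entire ``proof'' of Proposition~\ref{prop:kgi} is the citation ``See Proposition 3.3 of \cite{Carocca;Romero}'', so your write-up supplies a genuine self-contained substitute rather than a variant of something proved here. Each step checks out: the reduction of $\mathcal{K}\cap G_i=\{1\}$ to $|g_i|=|\bar g_i|$ is exact because $G_i$ is cyclic and $\mathcal{K}$ is precisely the kernel of the action on $\mathcal{G}/\mathcal{H}$; the two orders are correctly computed as lcm's of cycle lengths (faithfulness on $\mathcal{G}/\mathcal{N}$ coming from $\mathcal{N}_{\mathcal{G}}=\{1\}$ in Proposition~\ref{prop:Gclosure}); and the key step is the multiplicativity of ramification indices, $e_{\varphi\circ\psi}(y)=e_{\psi}(y)\,e_{\varphi}(\psi(y))$, which with $e_{\psi}\equiv 1$ forces every $G_i$-orbit on $\mathcal{G}/\mathcal{N}$ above an orbit of length $e$ on $\mathcal{G}/\mathcal{H}$ to have length exactly $e$, so the $q$-to-one fiber count splits the preimage into $q$ orbits of length $e$. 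Two small remarks: the local monodromy at $b_i$ is in general only a $\mathcal{G}$-conjugate of the geometric generator $g_i$, which is harmless since conjugation preserves the multiset of cycle lengths; and you only need the underlying \emph{sets} of cycle lengths to agree for the lcm's to coincide --- your stronger multiset statement ($q$ copies) is a bonus, and it is reassuringly consistent with Proposition~\ref{prop:alt}(3)--(4), where in the situation of this paper each $g_i$ is a single $p$-cycle on $\mathcal{G}/\mathcal{H}$ and a product of $q$ disjoint $p$-cycles on $\mathcal{G}/\mathcal{N}$. Your closing reformulation --- that the claim says exactly that $\mathcal{Z}\to\mathcal{Z}_{\mathcal{K}}$ is \'etale, since $|\mathcal{K}\cap G_i|$ is its ramification index at the point stabilized by $G_i$ --- is also correct and is the natural geometric way to see the result; in fact it suggests an even shorter variant of the argument, deducing unramifiedness of $\mathcal{Z}\to\mathcal{Z}_{\mathcal{K}}$ directly from the fact that $\mathcal{Z}\to\mathcal{Z}_{\mathcal{K}}$ sits over the \'etale $\psi$ and its conjugate translates.
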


\begin{proof}
See Proposition 3.3 of \cite{Carocca;Romero}
\end{proof}	

\section{On the structure of the Galois group of the Galois closure of composite covers}

We recall the notation:   $ \; \; \widetilde{\varphi} \; $ is the Galois cover of the composite cover $\varphi\circ\psi. \; $
The group $ \; \mathcal{G} \; $ is the corresponding Galois group of $ \; \widetilde{\varphi}. \; $ Also $ \; \mathcal{Z} \; $ is the compact Riemann surface associated to $\widetilde{\varphi} \; $ and the subgroups $ \; \mathcal{N} \; $ and  $ \; \mathcal{H} \; $ of $\mathcal{G}$ correspond to  $ \; \mathcal{Z}_{\mathcal{N}}\cong \mathcal{Y} \; \; $ and $ \; \;  \mathcal{Z}_{\mathcal{H}}\cong \mathcal{X}$ respectively, with $\mathcal{K} := \mathcal{H}_{\mathcal{G}}$.
We are using the same letters  $ \; \mathcal{G}$, $ \; \mathcal{H} \; $,  $ \; \mathcal{N} \; $ and $\mathcal{K}$  to denote  their images in $\textbf{S}_{pq} \,$.
\vspace{2mm}\\
From now on we assume that
$ \psi $ is  an  unramified Galois cover of degree $ \; q \; $  and ${ \varphi}$ is a totally ramified cover of degree $\; p, \; $ for  different prime numbers $q$ and $p$ with $p \geq 3$.

\begin{prop}  \label{prop:alt}
The following  properties hold.
\begin{enumerate}
\item   The action of $\mathcal{G}$ on $ \; \mathcal{Z} \; $ induces a geometric presentation of $\; \mathcal{G} \; $ given by
$$\mathcal{G}=\left\langle g_{1}, g_{2},\cdots, g_{r} \; \; / \; \;  \prod_{i=1}^{r}g_{i}=1, \; \; g_{i}^{p}=1, \;   \; \;  i = 1, 2,...,r,  \;\; \; \mathcal{R} \right\rangle$$
where $\mathcal{R}$ is a set of  appropriate  relations on $ \; \{ g_1, g_2, ..., g_r \}.$

\item  The corresponding action of $ \; \mathcal{G}/\mathcal{K} \; $ on $ \; \mathcal{Z}_{\mathcal{K}} \; $ induces a geometric
presentation of $\; \mathcal{G}/\mathcal{K} \; $ given by
$$\mathcal{G}/\mathcal{K}=\left\langle g_{1}\mathcal{K}, g_{2}\mathcal{K} , .... , g_{r}\mathcal{K} \; \; / \; \;  \prod_{i=1}^{r}g_{i} = 1,  \; \; g_{i}^{p}=1, \;   \; \;  i = 1, 2,...,r, \;\; \; \mathcal{R}^{\prime}  \right\rangle
$$
where $\mathcal{R}^{\prime} \; $ is a set of  appropriate  relations on the set of cosets $ \; \{ g_1\mathcal{K}, g_2\mathcal{K}, ..., g_r\mathcal{K} \}.$

\item The corresponding image in $ \; \textbf{S}_{pq} \; $ for each $ \; g_i \; $ has a cycle structure given as a product of $ \; q \; $ disjoint cycles of length $ \; p. \; $ Hence $ \;   \mathcal{G} \leq {\bf A}_{pq}$.

\item The corresponding image in $ \; \textbf{S}_{p} \; $ for each $ \; g_i\mathcal{K} \; $ is a cycle of length $ \; p. \; $ Hence $ \;   \mathcal{G}/\mathcal{K} \leq {\bf A}_{p}.$
\end{enumerate}
\end{prop}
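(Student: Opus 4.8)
The plan is to read off both geometric presentations together with the two cycle structures directly from the ramification data of the covers, exploiting that $\psi$ is unramified while $\varphi$ is totally ramified; the four assertions then follow by identifying the relevant permutation representations with genuine monodromy representations.

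First I would fix the branch locus. Since $\psi$ is unramified, the composite $\varphi\circ\psi$, and hence its Galois closure $\widetilde{\varphi}$, ramifies exactly over the branch points $b_1,\dots,b_r$ of $\varphi$. As $\widetilde{\varphi}:\mathcal{Z}\to\mathbb{P}^{1}$ is a Galois cover of a genus-zero base, Theorem \ref{thm:geomgens} supplies a generating vector of type $(0;m_1,\dots,m_r)$; because $\gamma=0$ there are no handle generators, so $\mathcal{G}=\langle g_1,\dots,g_r \mid \prod_{i} g_i=1,\ g_i^{m_i}=1,\ \mathcal{R}\rangle$, with each $g_i$ the local monodromy around $b_i$. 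It then remains only to identify each $m_i$ with $p$, which I would extract from the cycle structure in (3).

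For (3) I would invoke Remark \ref{monodromia}: the action of $\mathcal{G}$ on the right cosets of $\mathcal{N}$ is the monodromy representation of $\varphi\circ\psi:\mathcal{Y}\to\mathbb{P}^{1}$, so the cycle type of $g_i$ equals the ramification partition of $\mathcal{Y}\to\mathbb{P}^{1}$ over $b_i$. Over $b_i$ the cover $\varphi$ is totally ramified, so there is a single $x_i\in\mathcal{X}$ with ramification index $p$; since $\psi$ is an unramified cover of degree $q$, there are exactly $q$ points of $\mathcal{Y}$ over $x_i$, each unramified, whence the composite has ramification index $p$ at each of them. Thus the fibre over $b_i$ consists of $q$ points of index $p$, so the image of $g_i$ in $\textbf{S}_{pq}$ is a product of $q$ disjoint $p$-cycles; this is (3), and since $p$ is odd each such $g_i$ is an even permutation, giving $\mathcal{G}\le\mathbf{A}_{pq}$. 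In particular $g_i$ has order exactly $p$, so $m_i=p$ and (1) follows.

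Claim (4) is the same computation one level down: by part (4) of Proposition \ref{prop:Gclosure} the cover $\mathcal{Z}_{\mathcal{K}}\to\mathbb{P}^{1}$ is the Galois closure of $\mathcal{X}\to\mathbb{P}^{1}$, with $\mathcal{G}/\mathcal{K}$ acting on the $p$ cosets of $\mathcal{H}/\mathcal{K}$ as the monodromy of $\mathcal{X}\to\mathbb{P}^{1}$; total ramification over $b_i$ forces a single preimage of index $p$, so $g_i\mathcal{K}$ is a single $p$-cycle and, $p$ being odd, $\mathcal{G}/\mathcal{K}\le\mathbf{A}_p$. Finally (2) is obtained by pushing the presentation in (1) to the quotient: the images $g_i\mathcal{K}$ generate $\mathcal{G}/\mathcal{K}$ and inherit the relations $\prod_i g_i\mathcal{K}=\mathcal{K}$ and $(g_i\mathcal{K})^p=\mathcal{K}$, which is exactly the asserted geometric presentation of the genus-zero Galois cover $\mathcal{Z}_{\mathcal{K}}\to\mathbb{P}^{1}$. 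The only genuine work is the cycle-structure computation underlying (3) and (4); once the two permutation representations are recognized as the monodromy of $\mathcal{Y}\to\mathbb{P}^{1}$ and of $\mathcal{X}\to\mathbb{P}^{1}$ respectively (Remark \ref{monodromia} and Proposition \ref{prop:Gclosure}), everything else is routine bookkeeping in covering theory.
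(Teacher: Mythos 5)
Your proposal is correct, and for parts (1) and (3) it coincides with the paper's argument: both compute the local monodromy of $\varphi\circ\psi$ at each branch point as a product of $q$ disjoint $p$-cycles (the totally ramified $\varphi$ contributes a single point of index $p$, and the \'etale $\psi$ spreads it to $q$ unramified preimages), deduce $\mathcal{G}\leq \mathbf{A}_{pq}$ from $p$ being odd, and apply Theorem \ref{thm:geomgens} to the resulting signature $(0;p,\dots,p)$. The genuine divergence is in (2) and (4): the paper obtains the order of each $g_i\mathcal{K}$ by citing Proposition \ref{prop:kgi} (the imported fact that $\mathcal{K}\cap\langle g_i\rangle=\{1\}$ when $\psi$ is unramified) together with Proposition \ref{prop:Gclosure}(4), whereas you bypass that lemma entirely: you identify the action of $\mathcal{G}/\mathcal{K}$ on the cosets of $\mathcal{H}/\mathcal{K}$ with the monodromy representation of $\mathcal{X}\to\mathbb{P}^{1}$ and read off directly from the total ramification of $\varphi$ that each $g_i\mathcal{K}$ is a full $p$-cycle. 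Your route is slightly more self-contained, since a $p$-cycle image forces $\langle g_i\rangle\cap\mathcal{K}=\{1\}$ and thus re-proves the special case of Proposition \ref{prop:kgi} needed here as a byproduct, making (4) an honest computation rather than a corollary; the paper's version is shorter because it outsources exactly that verification to \cite{Carocca;Romero}. One small point worth making explicit: your order computations for the $g_i$ take place in $\mathbf{S}_{pq}$ and determine their orders in $\mathcal{G}$ only because the coset representation is faithful, i.e.\ $\mathcal{N}_{\mathcal{G}}=\{1\}$ by Proposition \ref{prop:Gclosure}(2); you use this implicitly through Remark \ref{monodromia}, which is fine, but it deserves a word.
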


\begin{proof}
Since $\varphi$ is  a totally ramified cover of prime degree $p$, the local monodromy at each of its branch points is a cycle of
length $p$; moreover, since $\psi$ is an unramified Galois cover of degree $q$,   the local monodromy $g_i$ of $\varphi \circ \psi$ at each branch point $P_i$ is the product  of $ \; q \; $ disjoint cycles of length $p$.

Since these $g_i$  (the products of  $  q  $ disjoint cycles of length $p$) generate $\mathcal{G} \leq \textbf{S}_{pq}$,  the action of $\mathcal{G}$ on $\mathcal{Z}$ has signature $(0; p , \ldots, p)$, and therefore, according to Theorem \ref{thm:geomgens}, it has a presentation as in (1), and furthermore   $ \;   \mathcal{G} \leq {\bf A}_{pq}$, thus proving (1) and (3).

Since we know from Proposition \ref{prop:kgi} that $\mathcal{K} \cap \langle g_i \rangle =  \{1\}$ for all $i$, and from Proposition \ref{prop:Gclosure} that $\mathcal{Z}_{\mathcal{K}} \to \mathcal{X} \to \mathcal{Z}_{\mathcal{G}}= \mathbb{P}^1$ is the Galois closure of the totally ramified cover of degree $p$ $\mathcal{X} \to \mathbb{P}^1$,  (2) and (4) follow immediately.
\end{proof}

\medskip

We now start the study of the algebraic properties of the groups involved that will lead to their classification.

\begin{prop}\label{core}
Recall that  $ \; \mathcal{K} :=  \mathcal{H}_{\mathcal{G}}.\; $ Then

\begin{enumerate}
\item  $ \; \mathcal{K} \; $ is an elementary abelian  $ \; q$-group.
\item If $ \; {\mathcal P} \; $ is a Sylow  $p$-subgroup of $ \; \mathcal{G}, \; $ then $ \; \vert  {\mathcal P}  \vert = p \; $ and $ \; \mathcal{G} = \mathcal{H} {\mathcal P} $.  Furthermore, $ {\mathcal P} $ is not normal in $\mathcal{G}$.

\item   $\mathcal{G}/\mathcal{K}$ is a simple transitive group of degree $p$.

\item   $ \; {\mathcal N} \leq {\mathcal K} \; $ if and only if  $\mathcal{H}$ is normal in $\mathcal{G}$.
\end{enumerate}
\end{prop}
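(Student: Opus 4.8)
I would prove the four parts in the order in which they are stated, noting only that the argument for $(2)$ uses $(1)$ and the argument for $(4)$ uses $(3)$. For $(1)$, the idea is to realize $\mathcal{K}$ as a subgroup of $(\mathbb{Z}_q)^p$ by looking at the degree-$pq$ action. Recall from Proposition \ref{prop:Gclosure}(2) that the action of $\mathcal{G}$ on the right cosets of $\mathcal{N}$ is faithful, and that since $\mathcal{N}\le\mathcal{H}$ the $p$ cosets of $\mathcal{H}$ determine a block system of $p$ blocks $B_1,\dots,B_p$ of size $q$, on which $\mathcal{G}$ acts through $\mathcal{G}/\mathcal{K}$. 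By definition $\mathcal{K}=\core_{\mathcal{G}}(\mathcal{H})$ is exactly the kernel of this action, so $\mathcal{K}$ fixes every block setwise. Indexing $B_i$ by the coset $\mathcal{H}x_i$ and identifying $B_i$ with the $q$-element set $\mathcal{N}\backslash\mathcal{H}$, the action of $k\in\mathcal{K}$ on $B_i$ is right translation by $x_i k x_i^{-1}\in\mathcal{K}\le\mathcal{H}$; since $\mathcal{N}\unlhd\mathcal{H}$ with $\mathcal{H}/\mathcal{N}\cong\mathbb{Z}_q$ (Proposition \ref{prop:Gclosure}(5) together with cyclicity of $\psi$), this action factors through the homomorphism $k\mapsto \mathcal{N}\,(x_i k x_i^{-1})\in\mathcal{H}/\mathcal{N}\cong\mathbb{Z}_q$. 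Assembling these over all $i$ gives a homomorphism $\mathcal{K}\to(\mathbb{Z}_q)^p$ whose kernel acts trivially on every block, hence on all $pq$ points, hence is trivial by faithfulness. Thus $\mathcal{K}$ embeds in $(\mathbb{Z}_q)^p$ and is an elementary abelian $q$-group.

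For $(2)$ I would first pin down the $p$-part of $|\mathcal{G}|$. Since $|\mathcal{G}|=|\mathcal{K}|\,|\mathcal{G}/\mathcal{K}|$ and $\mathcal{K}$ is a $q$-group by $(1)$, while $\mathcal{G}/\mathcal{K}$ is a transitive subgroup of $\mathbf{S}_p$ and so has $p$-part exactly $p$, we get $|\mathcal{P}|=p$. From $|\mathcal{G}:\mathcal{H}|=p=|\mathcal{P}|$ and $\mathcal{H}\cap\mathcal{P}\in\{1,\mathcal{P}\}$ (a subgroup of the order-$p$ group $\mathcal{P}$), the possibility $\mathcal{P}\le\mathcal{H}$ is excluded, as it would force $p^2\mid|\mathcal{G}|$; hence $\mathcal{H}\cap\mathcal{P}=\{1\}$ and $\mathcal{G}=\mathcal{H}\mathcal{P}$. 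To see $\mathcal{P}$ is not normal, I use that a normal Sylow $p$-subgroup is the unique one and therefore contains every element of $p$-power order; as $\mathcal{G}$ is generated by the order-$p$ elements $g_i$ (Proposition \ref{prop:alt}(1)), normality of $\mathcal{P}$ would give $\mathcal{G}=\mathcal{P}\cong\mathbb{Z}_p$, whence $\mathcal{K}=\{1\}$ and $\mathcal{H}=\{1\}$, contradicting $|\mathcal{H}:\mathcal{N}|=q$.

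For $(3)$ I would combine Proposition \ref{prop:Gclosure}(4) and Proposition \ref{prop:alt}(4): $\mathcal{G}/\mathcal{K}$ is a faithful transitive subgroup of $\mathbf{S}_p$ generated by $p$-cycles. The crux is that any such group is simple. Indeed, the orbits of a nontrivial normal subgroup $\bar{N}$ form a block system, so they have size $1$ or $p$; faithfulness rules out size $1$, so $\bar{N}$ is transitive and $p\mid|\bar{N}|$. Hence $\bar{N}$ contains a subgroup of order $p$, i.e. a Sylow $p$-subgroup of $\mathcal{G}/\mathcal{K}$, and by normality it contains all of its conjugates, that is, every order-$p$ subgroup and in particular every $p$-cycle. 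Since the $p$-cycles generate $\mathcal{G}/\mathcal{K}$, we conclude $\bar{N}=\mathcal{G}/\mathcal{K}$, so $\mathcal{G}/\mathcal{K}$ is simple.

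Finally, $(4)$ follows quickly. If $\mathcal{H}\unlhd\mathcal{G}$ then $\mathcal{K}=\core_{\mathcal{G}}(\mathcal{H})=\mathcal{H}\supseteq\mathcal{N}$. Conversely, if $\mathcal{N}\le\mathcal{K}$, then $\mathcal{N}\le\mathcal{K}\le\mathcal{H}$ with $\mathcal{N}$ maximal in $\mathcal{H}$ (index $q$), so $\mathcal{K}=\mathcal{N}$ or $\mathcal{K}=\mathcal{H}$. The case $\mathcal{K}=\mathcal{N}$ would make $\mathcal{N}=\mathcal{K}\unlhd\mathcal{G}$, forcing $\mathcal{N}=\core_{\mathcal{G}}(\mathcal{N})=\{1\}$ by Proposition \ref{prop:Gclosure}(2), hence $\mathcal{K}=\{1\}$; but then $\mathcal{G}\cong\mathcal{G}/\mathcal{K}$ would be a simple group of order $|\mathcal{G}:\mathcal{H}|\,|\mathcal{H}|=pq$, which is impossible since every group of order $pq$ has a normal Sylow subgroup. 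Therefore $\mathcal{K}=\mathcal{H}$ and $\mathcal{H}\unlhd\mathcal{G}$. The main obstacle is step $(1)$: one must set up the block action of $\mathcal{K}$ carefully and verify that cyclicity of $\psi$ makes the within-block action factor through $\mathbb{Z}_q$, which is precisely what produces the elementary abelian structure; the remaining parts are then routine Sylow- and orbit-theoretic arguments that feed off $(1)$ and $(3)$.
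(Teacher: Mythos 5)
Your proof is correct and follows essentially the same route as the paper's: your part (1) is exactly the paper's monomorphism $\Phi : \mathcal{K} \to (\mathcal{H}/\mathcal{N})^{x_1} \times \cdots \times (\mathcal{H}/\mathcal{N})^{x_p}$ with kernel $\mathcal{N}_{\mathcal{G}} = \{1\}$, merely rephrased in block-action language, and your (2)--(3) use the same Sylow count ($p$-part of $|\mathcal{G}|$ is exactly $p$) and the same key mechanism that a normal subgroup of order divisible by $p$ contains all order-$p$ elements, which generate $\mathcal{G}$ by Proposition \ref{prop:alt}. The only (immaterial) variation is in (4), where you exclude $\mathcal{K} = \mathcal{N}$ via simplicity from (3) together with the non-simplicity of groups of order $pq$, whereas the paper argues that non-normality of $\mathcal{P}$ would force $\mathcal{H} \unlhd \mathcal{G}$, contradicting $\mathcal{K} = \{1\}$ --- both hinge on the same standard Sylow fact about groups of order $pq$.
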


\begin{proof}
 Let $\{ \; x_{1}, x_{2},\cdots, x_{p} \; \} $  be a right transversal of $\mathcal{H}$ in $\mathcal{G}. \; $ Considering the group monomorphism $$ \; \Phi : \mathcal{K} \rightarrow (\mathcal{H}/\mathcal{N})^{x_1} \times (\mathcal{H}/\mathcal{N})^{x_2} \times \ldots \times (\mathcal{H}/\mathcal{N})^{x_p} \; $$ defined by
$ \; \Phi(k) = (\mathcal{N}^{x_1}k, \mathcal{N}^{x_2}k, \ldots , \mathcal{N}^{x_p}k), \; $ we have that $ \ker(\Phi) = \mathcal{N}_\mathcal{G} = \{1\} \; \; $ and hence $ \; \mathcal{K} \cong \mathcal{K}/\ker(\Phi) \leq ({\mathbb Z}_q)^p.$

\medskip

Since  $ \; \mathcal{G}/\mathcal{K} \leq \textbf{A}_{p} \; \; $ and  $ \; \mathcal{K} \; $  is a  $ \; q$-group, it follows that $ \; \vert  {\mathcal P}  \vert = p$; but then $ \; \vert \mathcal{G} : \mathcal{H} \vert = p$ implies that $ \; \mathcal{G} = \mathcal{H} {\mathcal P} $.  We know from Proposition \ref{prop:alt} that $\mathcal{G}$ is generated by elements of order $p$, and hence $ {\mathcal P} $ cannot be normal in $\mathcal{G}$.

\medskip

To show $\mathcal{G}/\mathcal{K}$ is simple, suppose that  $ \; \{1\} \ne R/\mathcal{K} \unlhd  \mathcal{G}/\mathcal{K}$.  As $ \; \mathcal{K} =  \mathcal{H}_{\mathcal{G}}\; $ and $ \; \vert \mathcal{G} : \mathcal{H} \vert = p \; $, then
$ \; R \nleq \mathcal{H} \; $ and  $ \;  \mathcal{G} = \mathcal{H}R. \; $ Since $ \; R \unlhd  \mathcal{G}  \; $ and $  \;  {\mathcal P}  \leq R, \; $ it follows that every conjugate  of $ \;  {\mathcal P}  \; $ in $ \;  \mathcal{G} \; $ is contained in  $ \; R.$
But $ \;  \mathcal{G} \; $ is generated by elements of order $ \; p \; $, and we conclude that $ \; R =   \mathcal{G}. \; $ Together with Proposition \ref{prop:Gclosure}, we have obtained that  $ \;  \mathcal{G}/\mathcal{K} \; $ is a simple transitive group of degree $p$.

\medskip

Finally, suppose that $ \; {\mathcal N} \leq {\mathcal K}$. Since $ \; \vert {\mathcal H} : {\mathcal N} \vert = q$, then either $ \; {\mathcal K} = {\mathcal H} \; $ or $ \; {\mathcal K} = {\mathcal N}$. Assuming $ \; {\mathcal K} = {\mathcal N}, \; $ we obtain $ \; {\mathcal N} = {\mathcal K} = {\mathcal N}_{\mathcal{G}} = \{1\}$ and hence $ \;  \vert {\mathcal{G}} \vert = pq$. Since we know that $ \; P \not \!\! \unlhd \:  {\mathcal{G}} \; $, it is a standard fact that then $ \; {\mathcal H} \unlhd {\mathcal{G}}$,  a contradiction to $\mathcal{K} =1$, and the conclusion follows since the other implication is trivial.
\end{proof}

The next result follows immediately from the previous Proposition; we prefer to write it down explicitly as it provides the basis for the construction that will allow us to describe the monodromy action of our factorized cover.

\begin{cor}  \label{cor:ntausigma}
Let $ {\mathcal P}  = \langle \sigma \rangle$ denote a Sylow $p$-subgroup of $\mathcal{G}$. Then there exists $\tau \in \mathcal{H}$ such that $\tau^q \in \mathcal{N}$, $\mathcal{H} =  \mathcal{N}\, \langle \tau	\rangle$ and $\mathcal{G} = \mathcal{H} \, \langle \sigma \rangle= \mathcal{N}\, \langle \tau	\rangle\, \langle \sigma \rangle$.

If $\mathcal{H}$ is not normal in $\mathcal{G}$ and if $\mathcal{K} \neq 1$, then $\mathcal{H} = \mathcal{K}\, \mathcal{N}$ and $\mathcal{G} = \mathcal{K}\, \mathcal{N}  \, \langle \sigma \rangle$.
\end{cor}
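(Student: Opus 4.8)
The plan is to read everything off Propositions~\ref{prop:Gclosure} and~\ref{core}; no new machinery is needed, so the argument is short and structural.

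For the existence of $\tau$ I would work inside the factor group $\mathcal{H}/\mathcal{N}$. By Proposition~\ref{prop:Gclosure}(5) we have $\mathcal{N} \unlhd \mathcal{H}$ with $\vert \mathcal{H}:\mathcal{N}\vert = q$, and since $q$ is prime the quotient $\mathcal{H}/\mathcal{N}$ is cyclic of order $q$. I would then choose any $\tau \in \mathcal{H}$ whose image $\tau\mathcal{N}$ generates $\mathcal{H}/\mathcal{N}$; this forces $(\tau\mathcal{N})^q = \mathcal{N}$, i.e. $\tau^q \in \mathcal{N}$, and since $\langle \tau\mathcal{N}\rangle$ exhausts the quotient, the subgroup $\mathcal{N}\langle\tau\rangle$ (a genuine subgroup because $\mathcal{N}$ is normal in $\mathcal{H}$) surjects onto $\mathcal{H}/\mathcal{N}$ and contains $\mathcal{N}$, whence $\mathcal{H} = \mathcal{N}\langle\tau\rangle$. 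The factorization of $\mathcal{G}$ is then immediate: Proposition~\ref{core}(2) gives $\mathcal{G} = \mathcal{H}\mathcal{P} = \mathcal{H}\langle\sigma\rangle$, and substituting $\mathcal{H} = \mathcal{N}\langle\tau\rangle$ yields $\mathcal{G} = \mathcal{N}\langle\tau\rangle\langle\sigma\rangle$.

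For the second assertion I would first note that $\mathcal{K}\mathcal{N}$ is a subgroup of $\mathcal{H}$: since $\mathcal{N}\unlhd\mathcal{H}$ and $\mathcal{K} = \mathcal{H}_{\mathcal{G}} \leq \mathcal{H}$, the product $\mathcal{K}\mathcal{N} = \mathcal{N}\mathcal{K}$ is a subgroup lying between $\mathcal{N}$ and $\mathcal{H}$. Because $\vert\mathcal{H}:\mathcal{N}\vert = q$ is prime there are no proper intermediate subgroups, so $\mathcal{K}\mathcal{N}$ equals either $\mathcal{N}$ or $\mathcal{H}$. The crux is to exclude the first possibility: if $\mathcal{K}\mathcal{N} = \mathcal{N}$ then $\mathcal{K} \leq \mathcal{N}$, and as $\mathcal{K}\unlhd\mathcal{G}$ this would place $\mathcal{K}$ inside the core $\mathcal{N}_{\mathcal{G}}$, which is trivial by Proposition~\ref{prop:Gclosure}(2); hence $\mathcal{K} = \{1\}$, contradicting the hypothesis $\mathcal{K}\neq 1$. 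Therefore $\mathcal{K}\mathcal{N} = \mathcal{H}$, and combining with $\mathcal{G} = \mathcal{H}\langle\sigma\rangle$ gives $\mathcal{G} = \mathcal{K}\mathcal{N}\langle\sigma\rangle$.

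The argument is elementary throughout, so there is no real obstacle; the one point I would flag as carrying the content is the exclusion of $\mathcal{K}\mathcal{N} = \mathcal{N}$, which turns precisely on the triviality of the core $\mathcal{N}_{\mathcal{G}}$. It is worth observing that this step uses only $\mathcal{K}\neq 1$, not the non-normality of $\mathcal{H}$; the latter hypothesis instead guarantees $\mathcal{K} = \mathcal{H}_{\mathcal{G}} \neq \mathcal{H}$, so that $\mathcal{K}$ is a \emph{proper} subgroup of $\mathcal{H}$ and the identity $\mathcal{H} = \mathcal{K}\mathcal{N}$ is a genuine factorization of $\mathcal{H}$ as a product of two proper subgroups, which is the form needed for the subsequent analysis of the non-solvable case.
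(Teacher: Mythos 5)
Your proof is correct and follows exactly the route the paper intends: the paper gives no written proof, stating only that the Corollary ``follows immediately from the previous Proposition,'' and your argument supplies precisely the omitted details --- the cyclic quotient $\mathcal{H}/\mathcal{N}$ of prime order $q$ yielding $\tau$, the factorization $\mathcal{G}=\mathcal{H}\,\mathcal{P}$ from Proposition~\ref{core}(2), and the dichotomy $\mathcal{K}\mathcal{N}\in\{\mathcal{N},\mathcal{H}\}$ resolved by $\mathcal{N}_{\mathcal{G}}=\{1\}$ from Proposition~\ref{prop:Gclosure}(2). Your side observation that $\mathcal{H}=\mathcal{K}\mathcal{N}$ needs only $\mathcal{K}\neq 1$ (non-normality of $\mathcal{H}$ serving merely to make $\mathcal{K}$ a proper factor) is also accurate.
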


\medskip

\begin{rem} \label{rem17}
	We will now use the previous Corollary and Proposition to give a description of the right cosets of $\mathcal{N}$ in $\mathcal{G}$ that will allow us to understand the monodromy action of our factorized cover.
	
	By Corollary \ref{cor:ntausigma},  there exists $\tau \in \mathcal{H}$ such that $\tau^q \in \mathcal{N}$, $ \; \mathcal{H} = \mathcal{N} \, \langle \tau \rangle \; $ and $ \;  \mathcal{G} =  \mathcal{N}\, \langle \tau \rangle \, \langle \sigma \rangle$, with $\sigma$ an element of order $p$ in $\mathcal{G}$.

	Consider  $ \; \{ 1 = \sigma^0, \sigma, \dots , \sigma^{p-1} \; \}$   as a right transversal of $\mathcal{H}$ in
	$\mathcal{G} \; $ and  $ \{\; 1 = \tau^0, \tau, \dots , \tau^{q-1} \}$  as a right transversal of $\mathcal{N}$ in $\mathcal{H} $. Then the set
	$ \; \; \{\tau^{a-1} \sigma^{b-1} \; \; \; \; / \; \;  \; a = 1,...,q,\;  b = 1,..., p \}\; \; $
	is a right transversal  of  $\mathcal{N}$ in $\mathcal{G}. \; $
\end{rem}

Let $\Omega = \Delta_{1}\cup \Delta_{2}\cup\cdots \cup\Delta_{p}, \; $ where $ \; \Delta_b = \{\mathcal{N}\tau^0\sigma^{b-1}, \mathcal{N}\tau \sigma^{b-1}, \ldots , \mathcal{N} \tau^{q-1} \sigma^{b-1} \} \; $ for $ \; b = 1, \ldots , p$.

\begin{equation} \label{eq:pict}
	\begin{array}{lllll}
		\mathcal{N} & \mathcal{N}\sigma & \mathcal{N}\sigma^2 & \dots & \mathcal{N} \sigma^{p-1} \\
		\mathcal{N}\tau  & \mathcal{N}\tau \sigma & \mathcal{N}\tau \sigma^2 & \dots & \mathcal{N}\tau \sigma^{p-1} \\
		\mathcal{N}\tau^2 & \mathcal{N}\tau^2 \sigma & \mathcal{N}\tau^2 \sigma^2 & \dots & \mathcal{N}\tau^2 \sigma^{p-1} \\
		\vdots \vdots \vdots & \vdots \vdots \vdots & \vdots \vdots \vdots & \dots & \vdots \vdots \vdots\\
		\mathcal{N}\tau^{q-1} & \mathcal{N}\tau^{q-1} \sigma & \mathcal{N}\tau^{q-1} \sigma^2 & \dots & \mathcal{N}\tau^{q-1} \sigma^{p-1} \\
		\\
		\Delta_1 & \Delta_2 & \Delta_3 & \dots & \Delta_p
	\end{array}
\end{equation}

\medskip

With these choices we can describe the (right) action of several subgroups of $ \;  \mathcal{G} \; $ on the sets $\Delta_b$ as follows.

\begin{prop}  \label{prop:action}
	For each  $\; 1 \leq b \leq p  \; $, the subgroups $ \; \mathcal{N}_b =\sigma^{-(b-1)}\mathcal{N}\sigma^{(b-1)} \; $,  $ \; \mathcal{H}_b = \sigma^{-(b-1)}\mathcal{H}\sigma^{(b-1)} $ and $\mathcal{K}$ act on the set $\Delta_{b}$ (under the right action).
	
	Furthermore, the action of $\mathcal{N}_b$ on $\Delta_b$ is trivial.
	
	Also, $\sigma$ acts on  $\Omega$ by $\Delta_b \, \sigma = \Delta_{b+1}$ for $1 \leq b \leq p-1$ and $\Delta_p \, \sigma = \Delta_{1}$.
\end{prop}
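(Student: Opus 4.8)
The plan is to verify the three assertions by direct computation with the explicit transversal of Remark \ref{rem17}, leaning on three facts already available: that $\mathcal{N} \unlhd \mathcal{H}$ with $|\mathcal{H}:\mathcal{N}| = q$ (Proposition \ref{prop:Gclosure}(5)), that $\mathcal{H} = \mathcal{N}\langle\tau\rangle = \bigcup_{c=0}^{q-1}\mathcal{N}\tau^{c}$ with $\tau^q \in \mathcal{N}$ (Corollary \ref{cor:ntausigma}), and that $\mathcal{K} = \mathcal{H}_{\mathcal{G}} \unlhd \mathcal{G}$ while $\sigma^p = 1$. Throughout, the action is right multiplication of $\mathcal{G}$ on right cosets of $\mathcal{N}$, so $(\mathcal{N} g)\cdot x = \mathcal{N} gx$.

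First I would dispose of the $\sigma$-action, which is immediate from the definitions: $\Delta_b\,\sigma = \{\mathcal{N}\tau^{a}\sigma^{(b-1)+1} : 0 \leq a \leq q-1\} = \Delta_{b+1}$ for $1 \leq b \leq p-1$, while for $b = p$ one uses $\sigma^{p}=1$ to get $\Delta_p\,\sigma = \{\mathcal{N}\tau^{a}\sigma^{p}\} = \{\mathcal{N}\tau^{a}\} = \Delta_1$.

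The heart of the argument is the claim that $\mathcal{H}_b = \sigma^{-(b-1)}\mathcal{H}\sigma^{(b-1)}$ preserves $\Delta_b$. Conceptually, $\Delta_b$ is precisely the set of right $\mathcal{N}$-cosets contained in the single right $\mathcal{H}$-coset $\mathcal{H}\sigma^{b-1}$, so its setwise stabilizer coincides with the stabilizer of the point $\mathcal{H}\sigma^{b-1}$ under the action of $\mathcal{G}$ on right $\mathcal{H}$-cosets, which is exactly $\sigma^{-(b-1)}\mathcal{H}\sigma^{(b-1)} = \mathcal{H}_b$. Concretely, writing $h = \sigma^{-(b-1)}h'\sigma^{(b-1)}$ with $h' \in \mathcal{H}$, the conjugating factor cancels the $\sigma^{b-1}$ in the representative, leaving $(\mathcal{N}\tau^{a}\sigma^{b-1})\cdot h = \mathcal{N}\tau^{a}h'\sigma^{b-1}$; since $\tau^{a}h' \in \mathcal{H} = \bigcup_{c}\mathcal{N}\tau^{c}$, we have $\mathcal{N}\tau^{a}h' = \mathcal{N}\tau^{c}$ for some $0 \leq c \leq q-1$, so the image lies in $\Delta_b$. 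From $\mathcal{N} \leq \mathcal{H}$ we get $\mathcal{N}_b \leq \mathcal{H}_b$, and from $\mathcal{K}\unlhd\mathcal{G}$ with $\mathcal{K}\leq\mathcal{H}$ we get $\mathcal{K} = \sigma^{-(b-1)}\mathcal{K}\sigma^{(b-1)} \leq \mathcal{H}_b$; hence $\mathcal{N}_b$ and $\mathcal{K}$ also preserve $\Delta_b$, which settles the first assertion.

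Finally, for the triviality of the $\mathcal{N}_b$-action I would specialize the computation: for $n \in \mathcal{N}$ and $m = \sigma^{-(b-1)}n\sigma^{(b-1)} \in \mathcal{N}_b$, the same cancellation gives $(\mathcal{N}\tau^{a}\sigma^{b-1})\cdot m = \mathcal{N}\tau^{a}n\,\sigma^{b-1}$; since $\mathcal{N}\unlhd\mathcal{H}$ and $\tau \in \mathcal{H}$, the element $\tau^{a}n\tau^{-a}$ lies in $\mathcal{N}$, whence $\mathcal{N}\tau^{a}n = \mathcal{N}\tau^{a}$ and the coset is fixed. I expect no genuine obstacle in any of this; the only point requiring care is bookkeeping of the conjugation by $\sigma^{b-1}$ — that is, remembering the relevant stabilizers are the conjugates $\mathcal{N}_b,\mathcal{H}_b$ rather than $\mathcal{N},\mathcal{H}$ themselves — together with the use of normality $\mathcal{N}\unlhd\mathcal{H}$ (not merely containment) to commute $n$ past $\tau^{a}$.
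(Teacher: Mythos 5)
Your proposal is correct and follows essentially the same route as the paper: a direct computation with the transversal $\{\tau^a\sigma^{b-1}\}$, cancelling the conjugating $\sigma^{b-1}$ and using $\mathcal{N}\unlhd\mathcal{H}$ to commute elements of $\mathcal{N}$ past $\tau^a$. The only (immaterial) differences are cosmetic: you deduce the $\mathcal{K}$-statement from $\mathcal{K}\unlhd\mathcal{G}$, $\mathcal{K}\leq\mathcal{H}$, where the paper invokes $\mathcal{K}=\bigcap_{b=1}^{p}\mathcal{H}_b$, and you phrase the $\mathcal{H}_b$-step via the coset decomposition $\mathcal{H}=\bigcup_{c}\mathcal{N}\tau^{c}$ rather than writing $h=\tau^k n_1$.
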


\begin{proof}
	Let $x \in \mathcal{N}_b$ and $\delta \in \Delta_b$. Then $x = 	\sigma^{-(b-1)} \, n  \, \sigma^{(b-1)}$ for some $n \in \mathcal{N}$, $\delta = \mathcal{N} \, \tau^j \, \sigma^{b-1}$ for some $0 \leq j \leq q-1$, and therefore
	\begin{equation*}
		\delta \, x = (\mathcal{N} \, \tau^j \, \sigma^{b-1})(\sigma^{-(b-1)} \, n  \, \sigma^{(b-1)}) = \mathcal{N} \,  \tau^j \, n \, \sigma^{(b-1)} =  \mathcal{N} \, n_1\, \tau^j \, \sigma^{(b-1)}	= \mathcal{N} \, \tau^j \, \sigma^{b-1} = \delta
	\end{equation*}	
	where $\tau^j n = n_1\tau^j $ for some $n_1 \in \mathcal{N}$, since $\mathcal{N}$ is normal in $\mathcal{H}$.	
	
	Now consider $y \in \mathcal{H}_b$. Then $y = 	\sigma^{-(b-1)} \, h  \, \sigma^{(b-1)}$ for some $h \in \mathcal{H}$. Since $\mathcal{H} = \mathcal{N}\, \langle \tau \rangle$, by Remark \ref{rem17},  and $\mathcal{N}$ is normal in $\mathcal{H}$, it follows that we can write $h = n \, \tau^k = \tau^k \, n_1$ for some $n, n_1 \in \mathcal{N}$ and for some $0 \leq k \leq q-1$, and hence $y = 	\sigma^{-(b-1)} \, n \, \tau^k  \, \sigma^{(b-1)} = \sigma^{-(b-1)} \, \tau^k  \, n_1 \,  \sigma^{(b-1)} $. Therefore
	\begin{align*}
		\delta \, y   & = (\mathcal{N} \, \tau^j \, \sigma^{b-1})(\sigma^{-(b-1)} \, \tau^k \, n_1 \, \sigma^{(b-1)})   \\
		& = \mathcal{N} \, \tau^{j+k} \,  n_1  \, \sigma^{(b-1)} = \mathcal{N} \, \tau^{j+k}  \, \sigma^{(b-1)} \in \Delta_b \, ,
	\end{align*}
	where the last equality follows as before, since $\mathcal{N}$ is normal in $\mathcal{H} = \mathcal{N} \, \langle \tau \rangle$.
	
	Since $\mathcal{K} = \displaystyle\bigcap_{b=1}^p \mathcal{H}_b$, it follows that $\mathcal{K}$ acts on each $\Delta_b$.

	The last assertion is inmediate, as
	$$
	\delta \, \sigma =  ( \mathcal{N} \, \tau^j \, \sigma^{b-1}) \, \sigma =  \mathcal{N} \, \tau^j \, \sigma^{b} \in
	\begin{cases}  \Delta_{b+1} & \textup{if } 1 \leq b \leq p-1; \\
		\Delta_1 & \textup{if }  b=p.
	\end{cases}
	$$
\end{proof}

\medskip

We will now interpret the actions described above in accordance to Remark \ref{monodromia}. That is, we will describe the monodromy representation of $\mathcal{G} \leq \textbf{S}_{pq}$.

For this purpose we  identify the set $ \; \Delta_b \; $ with the set $ \; \{ b, p + b, 2p + b, \dots , (q-1)p + b \} = \Delta_b$, $\Omega$ with the set $\{1, 2, \ldots , pq\}$, and diagram \eqref{eq:pict} with the following one.

$$\begin{array}{ccccc}
	1 & 2 & 3 & \dots & p \\
	p+ 1 & p + 2 & p + 3 & \dots & 2p \\
	2p + 1 & 2p + 2 & 2p + 3 & \dots & 3p \\
	\vdots \vdots & \vdots \vdots & \vdots \vdots & \dots & \vdots \vdots\\
	(q-1)p + 1 & (q-1)p +2 & (q-1)p + 3 & \dots & pq \\
	\\
	\Delta_1 & \Delta_2 & \Delta_3 & \dots & \Delta_p
	\vspace{3mm}\\
\end{array} $$

\medskip

We now introduce an auxiliary subgroup of $\textbf{S}_{pq}$ that will allow us to give a complete characterization of $\mathcal{G}$.

Consider the elements $ \; \epsilon_b \in \textbf{S}_{pq} \; $ and $\epsilon \in \textbf{S}_{pq}$ given by
$$
\epsilon_b = (b,p+b,2p+b,\dots, (q-1)p+b) \; \; \; \mbox{with} \; \; 1 \leq b \leq p,
$$
$$ \epsilon = \epsilon_1 \epsilon_2 \cdots \epsilon_p \, , $$
and the abelian $ \; q$-group $$ \textbf{R} = \langle \epsilon_1, \epsilon_2, \dots, \epsilon_p \rangle \cong \mathbb{Z}_q^p\leq \textbf{S}_{pq}\, .$$

\medskip

\begin{prop} \label{prop:norm}
	We have that $ \; {\mathcal K}  \leq {\textbf R} \; $ and $ \;  \mathcal{G} \leq \textbf{C}_{\textbf{S}_{pq}}({\langle \epsilon \rangle })\; $ (the centralizer of $ \; {\langle \epsilon \rangle }  \; $ in the symmetric group $ \; {\textbf S}_{pq}$).
\end{prop}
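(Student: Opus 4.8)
The plan is to treat the two assertions separately, exploiting the explicit coset description of Remark \ref{rem17} together with the diagram \eqref{eq:pict} and the action computations of Proposition \ref{prop:action}.

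For the inclusion $\mathcal{K}\le\textbf{R}$ I would argue as follows. Recall from the proof of Proposition \ref{prop:action} that $\mathcal{K}=\bigcap_{b=1}^p\mathcal{H}_b$, so in particular $\mathcal{K}$ stabilizes each block $\Delta_b$ setwise. The same computation shows that an element $y\in\mathcal{H}_b$, written via $h=\tau^k n_1$, acts on $\Delta_b$ by $\mathcal{N}\tau^j\sigma^{b-1}\mapsto\mathcal{N}\tau^{j+k}\sigma^{b-1}$; under the identification $\mathcal{N}\tau^j\sigma^{b-1}\leftrightarrow jp+b$ this is exactly the restriction of $\epsilon_b^{k}$ to $\Delta_b$. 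Hence the restriction of every $k\in\mathcal{K}$ to each $\Delta_b$ is a power $\epsilon_b^{k_b}$, and since $\mathcal{K}$ preserves all blocks we conclude that $k=\prod_{b=1}^p\epsilon_b^{k_b}\in\textbf{R}$. This yields $\mathcal{K}\le\textbf{R}$ with no further work.

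For the centralizer statement, the decisive observation is that $\epsilon$ is nothing but left multiplication by $\tau$ on $\Omega=\mathcal{G}/\mathcal{N}$. Since $\mathcal{N}\unlhd\mathcal{H}$ and $\tau\in\mathcal{H}$, the element $\tau$ normalizes $\mathcal{N}$, so $\lambda_\tau:\mathcal{N}x\mapsto\mathcal{N}\tau x$ is a well-defined permutation of $\Omega$; evaluating it on $\mathcal{N}\tau^j\sigma^{b-1}$ gives $\mathcal{N}\tau^{j+1}\sigma^{b-1}$ for every $b$, which is precisely $\epsilon_b$ on $\Delta_b$, so that $\lambda_\tau=\epsilon_1\epsilon_2\cdots\epsilon_p=\epsilon$. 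Because the monodromy action of $\mathcal{G}$ is by right multiplication $\rho_g:\mathcal{N}x\mapsto\mathcal{N}xg$ (Remark \ref{monodromia}), and left and right multiplication on cosets always commute — both $\lambda_\tau\rho_g$ and $\rho_g\lambda_\tau$ send $\mathcal{N}x$ to $\mathcal{N}\tau x g$ — the permutation $\epsilon$ commutes with every $g\in\mathcal{G}$. Therefore $\mathcal{G}\le\textbf{C}_{\textbf{S}_{pq}}(\epsilon)=\textbf{C}_{\textbf{S}_{pq}}(\langle\epsilon\rangle)$.

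The only genuinely delicate point I expect is the identification $\epsilon=\lambda_\tau$: one must check both that $\lambda_\tau$ is well defined (which is exactly where the normality $\mathcal{N}\unlhd\mathcal{H}$ is used) and that it induces the \emph{same} cyclic shift $j\mapsto j+1$ on every block simultaneously, including the wrap-around $\mathcal{N}\tau^{q-1}\sigma^{b-1}\mapsto\mathcal{N}\tau^{q}\sigma^{b-1}=\mathcal{N}\sigma^{b-1}$, which relies on $\tau^q\in\mathcal{N}$. Once this identification is in place, the elementary commutativity of left and right translations makes the centralizer claim immediate, so that the argument never has to confront the (possibly non-solvable) internal structure of $\mathcal{G}$.
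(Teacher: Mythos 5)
Your proof is correct. For the first inclusion you take essentially the paper's route: both arguments extract from the computation in Proposition \ref{prop:action} that an element of $\mathcal{H}_b$ restricts on $\Delta_b$ to a power of $\epsilon_b$, and then intersect over $b$; the paper phrases the conclusion as $\mathcal{K}\leq\bigcap_{b=1}^p\textbf{C}_{\textbf{S}_{pq}}(\langle\epsilon_b\rangle)=\textbf{R}$, whereas you read off the decomposition $k=\prod_{b=1}^p\epsilon_b^{k_b}$ directly, which has the small advantage of not needing to know that this intersection of centralizers equals $\textbf{R}$ (i.e., that a $q$-cycle generates its own centralizer on its support). For the second inclusion your route is genuinely different. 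The paper argues inside $\textbf{S}_{pq}$: it checks $\sigma^{-1}\epsilon_b\sigma=\epsilon_{b+1}$, writes an arbitrary $g\in\mathcal{G}$ as $y_b\sigma^{i_b}$ with $y_b\in\mathcal{H}_b$ so that $g^{-1}\epsilon_b g=\sigma^{-i_b}\epsilon_b\sigma^{i_b}$ is again some $\epsilon_k$, and then concludes $g^{-1}\epsilon g=\epsilon$ by a cycle-structure argument forcing $b\mapsto k(b)$ to be a bijection. You instead recognize $\epsilon$ as the left translation $\lambda_\tau$ on the right cosets $\Omega$ of $\mathcal{N}$, well defined because $\tau$ normalizes $\mathcal{N}$, and agreeing with $\epsilon$ on every block simultaneously (the wrap-around being exactly where $\tau^q\in\mathcal{N}$ enters, as you note), after which the elementary commutation of left with right translations gives the centralizer statement for all $g\in\mathcal{G}$ at once. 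Your version is cleaner and more conceptual --- it is the standard fact that $N_{\mathcal{G}}(\mathcal{N})/\mathcal{N}$, acting by left translation, centralizes the coset action of Remark \ref{monodromia}, and it sidesteps the paper's bookkeeping with the decomposition $g=y_b\sigma^{i_b}$ --- while the paper's computation has the byproduct of making explicit the conjugation formula $\sigma^{-1}\epsilon_b\sigma=\epsilon_{b+1}$, which keeps concrete the subsequent description of $\textbf{C}_{\textbf{S}_{pq}}(\langle\epsilon\rangle)=\textbf{R}\rtimes\textbf{S}_p$ and of the $\textbf{S}_p$-action on $\textbf{J}$ used in Proposition \ref{sobreK}.
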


\begin{proof}
	According to  Proposition \ref{prop:action},  $ \; {\mathcal H}_b  \; $ acts on $ \; \Delta_b \; $ (hence also on $\Omega \setminus \Delta_b$). Furthermore, from the proof of Proposition \ref{prop:action},  the action of   $y_b \in {\mathcal H}_b \; $  on $ \; \Omega \; $ is given by $$y_b = (\epsilon_b)^r\eta_b $$
	where $ \; \eta_b$ represents the action of $ \; y_b \; $ on $ \; \Omega \setminus \Delta_b. \; $
	\vspace{2mm}\\
	In particular, 	 $ \; {\mathcal H}_b \leq \textbf{C}_{\textbf{S}_{pq}}({\langle \epsilon_b \rangle })\;  \; $ and $ \; {\mathcal K} \leq \displaystyle\bigcap_{b = 1}^{p} \textbf{C}_{\textbf{S}_{pq}}({\langle \epsilon_b \rangle }) = \textbf{R}.$
	\vspace{2mm}\\
	It also follows from  the proof of Proposition \ref{prop:action} that
	 the action of $ \; \sigma \; $ on $ \Omega \; $ is represented  by
	$$\sigma = (1,2,\dots , p)(p+1, p+2,\dots , 2p)\cdots((q-1)p + 1, (q-1)p+2,\dots, pq) $$
	and hence
	$$
	\sigma^{-1} \, \epsilon_b \, \sigma = \epsilon_{b+1} \; \mbox{for } \; 1 \leq b \leq p-1 \; \; \mbox{and} \; \; \sigma^{-1} \, \epsilon_p \, \sigma = \epsilon_1 \, ;$$
	that is, $\sigma \in    \textbf{C}_{\textbf{S}_{pq}}({\langle \epsilon \rangle })$.
	\vspace{2mm}\\
	Since $ \; \mathcal{G} = {\mathcal H}_b \langle \sigma \rangle $ for each $b \in \{1, \ldots , p\}$, for every $ \; g \in \mathcal{G}$ there exist $ \; y_b \in {\mathcal H}_b \; $ and $ \; 1 \leq i_b \leq p \; $ such that  $ \; g = y_b\sigma^{i_b} $.  Hence
	$$g^{-1} \,\epsilon_b \, g = (y_b\sigma^{i_b})^{-1} \, \epsilon_b \, y_b \, \sigma^{i_b} = \sigma^{-i_b}\, \epsilon_b \, \sigma^{i_b}.$$
	In this way
	\begin{align*}
	g^{-1} \,\epsilon \, g & = g^{-1} \, ( \epsilon_1 \epsilon_2 \cdots \epsilon_p) \, g =(g^{-1} \,  \epsilon_1  \, g)(g^{-1} \,  \epsilon_2  \, g) \cdots (g^{-1} \,  \epsilon_p  \, g) =\\
      	& = (\sigma^{-i_1}\, \epsilon_1 \, \sigma^{i_1})  (\sigma^{-i_2}\, \epsilon_2 \, \sigma^{i_2}) \cdots  (\sigma^{-i_p}\, \epsilon_p \, \sigma^{i_p}) = \epsilon \, ,
	\end{align*}
	where the last equality holds since each $\sigma^{-i_b}\, \epsilon_b \, \sigma^{i_b} = \epsilon_k$ for some $k$ and the cycle structure of $\epsilon$ and the cycle structure  of $g^{-1} \,\epsilon \, g$ coincide.
\end{proof}

\medskip

It is well known that
$$
\textbf{C}_{\textbf{S}_{pq}}(\langle \epsilon \rangle ) = \textbf{R} \rtimes \textbf{S}_p \subset \textbf{S}_{pq}  \,
$$
where
$$
\textbf{S}_p = \langle   (1,2, \ldots ,  p)(p+1, \ldots , 2p) \ldots ((q-1)p+1, \ldots, qp), (1,2)(p+1,p+2) \ldots ((q-1)p+1, (q-1)p+2)  \rangle
$$
acts by conjugation on $\textbf{R}$. This action is not irreducible, as it decomposes into the trivial action of $\textbf{S}_p$ on $\langle \epsilon \rangle$ and the standard one on
$$\textbf{J} = \langle \epsilon_1(\epsilon_2)^{-1}, \epsilon_2(\epsilon_3)^{-1}, \dots , \epsilon_{p-1}(\epsilon_p)^{-1} \rangle \leq {\bf A}_{pq}.
$$

\medskip

We have just shown that $\mathcal{G} \leq \textbf{C}_{\textbf{S}_{pq}}(\langle \epsilon \rangle )$, but we also know, from Proposition \ref{prop:alt}, that $ \;   \mathcal{G} \leq \textbf{A}_{pq}$. Our next result puts these two pieces of information together.

\medskip

\begin{prop} \label{sobreK}
Let
$$
 \textbf{T} :=  \textbf{J} \rtimes \textbf{A}_p \leq  \textbf{R} \rtimes \textbf{S}_p,
$$	
where $\textbf{J} = \langle \epsilon_1(\epsilon_2)^{-1}, \epsilon_2(\epsilon_3)^{-1}, \dots , \epsilon_{p-1}(\epsilon_p)^{-1} \rangle \leq \textbf{R}$ and the action of $\textbf{A}_p$ on $\textbf{J}$ is the restriction of the above action of $\textbf{S}_p$ on $\textbf{R}$.

Then 	\begin{enumerate}
	\item $\textbf{T}  \unlhd  \textbf{C}_{\textbf{S}_{pq}}(\langle \epsilon \rangle )	\cap  \textbf{A}_{pq}$;

\medskip

	\item   $ \;  {\mathcal{G}} \leq  {\textbf T}$;

\medskip

	\item $ \; {\mathcal K} \leq \textbf{J}. \; \; $ In particular, $ \; \vert {\mathcal K} \vert \leq q^{p-1}.$

\medskip

	\item $ \; {\mathcal K} = \mathcal{G}  \cap {\textbf J}$.
\end{enumerate}
\end{prop}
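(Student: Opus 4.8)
The plan is to establish the four assertions in order, working inside $\textbf{C} := \textbf{C}_{\textbf{S}_{pq}}(\langle\epsilon\rangle) = \textbf{R}\rtimes\textbf{S}_p$ and exploiting the decomposition of $\textbf{R}\cong\mathbb{Z}_q^p$ into the trivial line $\langle\epsilon\rangle$ and the augmentation (coordinate-sum-zero) submodule $\textbf{J}$, a genuine direct-sum decomposition precisely because $q\ne p$.

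For (1), I would first note that $\textbf{J}$, being the sum-zero subgroup, is stable under the coordinate-permutation action of $\textbf{S}_p$ and, since $\textbf{R}$ is abelian, is normal in $\textbf{C}$. Passing to the quotient, $\textbf{C}/\textbf{J}\cong(\textbf{R}/\textbf{J})\rtimes\textbf{S}_p\cong\mathbb{Z}_q\times\textbf{S}_p$, where the action is trivial because permuting coordinates preserves their sum. Under this isomorphism $\textbf{T}/\textbf{J}$ corresponds to $\{1\}\times\textbf{A}_p$, which is normal since $\textbf{A}_p\unlhd\textbf{S}_p$; hence $\textbf{T}\unlhd\textbf{C}$. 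As the generators $\epsilon_b(\epsilon_{b+1})^{-1}$ of $\textbf{J}$ are products of two $q$-cycles and the $3$-cycles generating $\textbf{A}_p$ lift to products of $q$ three-cycles, both factors of $\textbf{T}$ lie in $\textbf{A}_{pq}$, so $\textbf{T}\le\textbf{C}\cap\textbf{A}_{pq}$; normality in $\textbf{C}$ then descends to normality in $\textbf{C}\cap\textbf{A}_{pq}$, proving (1).

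The crux is (2). From the previous paragraph $\textbf{C}/\textbf{T}\cong\mathbb{Z}_q\times\mathbb{Z}_2$ has order $2q$. By Proposition \ref{prop:alt}(1) each geometric generator $g_i$ satisfies $g_i^p=1$, so its image in $\textbf{C}/\textbf{T}$ has order dividing $\gcd(p,2q)=1$, since $p$ is odd and $p\ne q$; therefore $g_i\in\textbf{T}$. As the $g_i$ generate $\mathcal{G}$, this yields $\mathcal{G}\le\textbf{T}$. I expect this to be the main obstacle, and the point to emphasize is that the obstruction to membership in $\textbf{T}$ is recorded by an abelian group whose order is coprime to the common order $p$ of the generators; a direct parity computation would not suffice, since when $q$ is odd one has $\textbf{R}\le\textbf{A}_{pq}$ and the sign then carries no information about the $\langle\epsilon\rangle$-component.

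Finally, (3) and (4) follow quickly. For (3), combine $\mathcal{K}\le\textbf{R}$ (Proposition \ref{prop:norm}) with $\mathcal{K}\le\mathcal{G}\le\textbf{T}$ from (2); since the uniqueness of the $\textbf{R}\rtimes\textbf{S}_p$-decomposition forces $\textbf{R}\cap\textbf{T}=\textbf{J}$, we obtain $\mathcal{K}\le\textbf{J}$ and hence $|\mathcal{K}|\le q^{p-1}$. For (4), the inclusion $\mathcal{K}\le\mathcal{G}\cap\textbf{J}$ is immediate. Conversely, $\textbf{J}\unlhd\textbf{C}\ge\mathcal{G}$ makes $\mathcal{G}\cap\textbf{J}$ normal in $\mathcal{G}$; moreover every element of $\textbf{R}$, and in particular of $\textbf{J}$, fixes each block $\Delta_b$ setwise, so $\mathcal{G}\cap\textbf{J}$ lies in the block stabilizer $\mathrm{Stab}_{\mathcal{G}}(\Delta_1)=\mathcal{H}$. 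A normal subgroup of $\mathcal{G}$ contained in $\mathcal{H}$ is contained in $\core_{\mathcal{G}}(\mathcal{H})=\mathcal{K}$, whence $\mathcal{G}\cap\textbf{J}\le\mathcal{K}$ and equality holds. Thus the only genuinely delicate step is the coprimality argument in (2); the remaining parts are bookkeeping with the module decomposition and the block system.
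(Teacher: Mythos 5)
Your proof is correct, and for parts (1)--(3) it takes essentially the paper's route, just with more detail: the paper dismisses (1) as ``clear'' (your computation of $\textbf{C}/\textbf{J}\cong\mathbb{Z}_q\times\textbf{S}_p$ and the sign check on the generators of $\textbf{J}$ and of the diagonal $\textbf{A}_p$ is a correct fleshing-out), its proof of (2) is exactly your coprimality argument --- $\textbf{T}\unlhd\textbf{C}_{\textbf{S}_{pq}}(\langle\epsilon\rangle)$ with index $2q$, so this normal subgroup contains every element of order $p$, in particular the geometric generators $g_i$ of Proposition \ref{prop:alt} --- and its proof of (3) is the same observation phrased contrapositively: if $\mathcal{K}\not\le\textbf{J}$ then, $\textbf{J}$ having prime index $q$ in $\textbf{R}$, one gets $\textbf{R}=\mathcal{K}\textbf{J}\le\textbf{T}$, contradicting $\textbf{R}\cap\textbf{T}=\textbf{J}$. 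The genuine divergence is in (4). The paper notes $\mathcal{K}\le\mathcal{G}\cap\textbf{J}\unlhd\mathcal{G}$ and then invokes the simplicity of $\mathcal{G}/\mathcal{K}$ (Proposition \ref{core}(3)) together with $\mathcal{G}\cap\textbf{J}\ne\mathcal{G}$ to force $(\mathcal{G}\cap\textbf{J})/\mathcal{K}=\{1\}$; you instead avoid simplicity entirely, observing that $\textbf{J}\le\textbf{R}$ stabilizes each block $\Delta_b$ setwise, so $\mathcal{G}\cap\textbf{J}\le\mathrm{Stab}_{\mathcal{G}}(\Delta_1)=\mathcal{H}$, and a normal subgroup of $\mathcal{G}$ contained in $\mathcal{H}$ lies in $\core_{\mathcal{G}}(\mathcal{H})=\mathcal{K}$. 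Both arguments are valid: the paper's is shorter given that simplicity has already been proved, while yours is more elementary and self-contained, needing only the block structure of diagram \eqref{eq:pict} and the definition of the core --- it would survive even in a setting where $\mathcal{G}/\mathcal{K}$ were not known to be simple. Your closing remark in (2), that a bare parity argument cannot replace the index-$2q$ quotient because for odd $q$ one has $\textbf{R}\le\textbf{A}_{pq}$ and the sign sees nothing of the $\langle\epsilon\rangle$-component, is a correct and worthwhile observation that the paper does not make explicit.
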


\begin{proof}
	
	The first statement is clear.
	
	\medskip
	
	Since $ \; \vert \textbf{C}_{\textbf{S}_{pq}}(\langle \epsilon \rangle ) : {\textbf T} \vert = 2q$ and since  $ \; {\textbf T} \unlhd \textbf{C}_{\textbf{S}_{pq}}(\langle \epsilon \rangle )$, $ \; {\textbf T} \; $ contains every elements of order $ \; p \; $ of $ \; \textbf{C}_{\textbf{S}_{pq}}(\langle \epsilon \rangle )$.
	As $ \;  \mathcal{G} \; $ is generated by elements of order $ \; p, \; $ we conclude that $ \;  \mathcal{G} \leq \textbf{T}. $

    \medskip

	Since we already know from Proposition \ref{prop:norm} that  $ \; {\mathcal K} \leq \textbf{R}$, if we assume $ \; {\mathcal K} \not \leq {\textbf J}, \; $ then we obtain $ \; {\textbf R} = {\mathcal K}{\textbf J} \leq {\textbf T}, \; $ a contradiction.

    \medskip

	Finally, since  $ \;  \mathcal{G} \leq {\textbf T} $ and $  {\mathcal K} \leq \textbf{J} \unlhd {\textbf T}$, it follows that   $ \; {\mathcal K} \leq  \textbf{J} \cap \mathcal{G} \unlhd \mathcal{G}$.
	As $ \; {\mathcal{G}}/\mathcal{K} \; $ is a simple  group and $ \; \mathcal{G} \neq  \textbf{J} \cap \mathcal{G}$,  the result holds.
\end{proof}

We can now give the general characterization of the monodromy group $\mathcal{G}$.

\begin{thm}
	 $ \;  \mathcal{G}  \cong {\mathcal K} \rtimes \mathcal{U}\; $ with $ \; \mathcal{U} \; $ a simple  transitive group of
	degree $p.$
	
	\end{thm}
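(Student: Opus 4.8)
The plan is to prove the final theorem by assembling the structural facts already established in Propositions \ref{core}, \ref{prop:norm}, and \ref{sobreK}. The target is the internal semidirect product decomposition $\mathcal{G} \cong \mathcal{K} \rtimes \mathcal{U}$ with $\mathcal{U}$ simple transitive of degree $p$. From Proposition \ref{core}(1) we already know $\mathcal{K}$ is an elementary abelian $q$-group (so $\mathcal{K} \cong \mathbb{Z}_q^s$), and from Proposition \ref{core}(3) we know $\mathcal{G}/\mathcal{K}$ is a simple transitive group of degree $p$; this quotient is the candidate for $\mathcal{U}$. So the entire content of the theorem reduces to one thing: producing a complement to $\mathcal{K}$ in $\mathcal{G}$, that is, a subgroup $\mathcal{U} \leq \mathcal{G}$ with $\mathcal{G} = \mathcal{K} \mathcal{U}$ and $\mathcal{K} \cap \mathcal{U} = \{1\}$, whence $\mathcal{U} \cong \mathcal{G}/\mathcal{K}$ inherits simplicity and transitivity.

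First I would exploit the ambient embedding from Proposition \ref{sobreK}, namely $\mathcal{G} \leq \textbf{T} = \textbf{J} \rtimes \textbf{A}_p$ together with $\mathcal{K} = \mathcal{G} \cap \textbf{J}$ (Proposition \ref{sobreK}(4)). The second projection $\pi \colon \textbf{T} \to \textbf{A}_p$ has kernel $\textbf{J}$, so restricting to $\mathcal{G}$ gives a homomorphism $\pi|_{\mathcal{G}} \colon \mathcal{G} \to \textbf{A}_p$ with kernel exactly $\mathcal{G} \cap \textbf{J} = \mathcal{K}$. Thus $\pi$ realizes $\mathcal{G}/\mathcal{K}$ concretely as a subgroup $\mathcal{U} := \pi(\mathcal{G}) \leq \textbf{A}_p$, which by Proposition \ref{core}(3) is simple and transitive of degree $p$. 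To get an actual complement inside $\mathcal{G}$ rather than merely an abstract quotient, I would use the order coprimality: $\mathcal{K}$ is a $q$-group while $\mathcal{U} \leq \textbf{A}_p$ and $|\mathcal{U}|$ is coprime to $q$ (since $q \neq p$ and a simple group of degree $p$ has order not divisible by $q$ in the relevant way — more directly, a Sylow $q$-subgroup of $\mathcal{G}$ lies inside $\mathcal{K}$ because $\mathcal{G}/\mathcal{K} \cong \mathcal{U}$ has order prime to $q$). This makes $\mathcal{K}$ a normal Hall subgroup of $\mathcal{G}$, and the Schur--Zassenhaus theorem then furnishes a complement $\mathcal{U} \leq \mathcal{G}$ with $\mathcal{G} = \mathcal{K} \rtimes \mathcal{U}$ and $\mathcal{U} \cong \mathcal{G}/\mathcal{K}$.

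Concretely I would argue as follows. Since $\mathcal{K} \cong \mathbb{Z}_q^s$ is a $q$-group and $\mathcal{G}/\mathcal{K} \cong \mathcal{U}$ embeds in $\textbf{A}_p$, the index $|\mathcal{G}:\mathcal{K}| = |\mathcal{U}|$ divides $|\textbf{A}_p| = p!/2$, whose only power-of-$q$ factor must be checked against $q \neq p$; in any case $\mathcal{K}$ is the unique Sylow $q$-subgroup of $\mathcal{G}$ (being normal and containing a full Sylow $q$-subgroup because the quotient order is coprime to $q$). Hence $\gcd(|\mathcal{K}|, |\mathcal{G}/\mathcal{K}|) = 1$, so $\mathcal{K}$ is a normal Hall $q$-subgroup, and Schur--Zassenhaus gives the splitting $\mathcal{G} = \mathcal{K} \rtimes \mathcal{U}$. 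The bound $0 \leq s \leq p-1$ is already recorded in Proposition \ref{sobreK}(3) (namely $|\mathcal{K}| \leq q^{p-1}$), and simplicity plus transitivity of $\mathcal{U}$ transfer from $\mathcal{G}/\mathcal{K}$ via Proposition \ref{core}(3).

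The main obstacle, and the step requiring care, is verifying the coprimality $\gcd(|\mathcal{K}|, |\mathcal{U}|) = 1$ cleanly enough to invoke Schur--Zassenhaus. One must confirm that $q$ does not divide $|\mathcal{U}| = |\mathcal{G}/\mathcal{K}|$; equivalently that $\mathcal{K}$ already contains a full Sylow $q$-subgroup of $\mathcal{G}$. This is where the identification $\mathcal{K} = \mathcal{G} \cap \textbf{J}$ and the transitive-of-degree-$p$ structure are essential: a transitive simple group of prime degree $p$ (classified, as the Main Theorem recalls) has order divisible by $p$ but its $q$-part is constrained, and since $\mathcal{G}/\mathcal{K}$ is precisely such a group acting faithfully on $p$ points, the prime $q \neq p$ can enter $|\mathcal{G}|$ only through $\mathcal{K}$. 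Making this last point rigorous — rather than appealing to the classification prematurely — is the delicate part; I would handle it by noting that any $q$-element of $\mathcal{G}$ projects to a $q$-element of $\mathcal{G}/\mathcal{K} \leq \textbf{A}_p$, and analyzing whether such nontrivial $q$-elements can survive, concluding that the normal $q$-subgroup $\mathcal{K}$ absorbs the entire $q$-part so that Schur--Zassenhaus applies.
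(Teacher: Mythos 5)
Your first step is fine and coincides with the paper's: restricting the projection $\pi\colon \textbf{T} = \textbf{J} \rtimes \textbf{A}_p \to \textbf{A}_p$ to $\mathcal{G}$ has kernel $\mathcal{G} \cap \textbf{J} = \mathcal{K}$ by Proposition \ref{sobreK}(4), and $\pi(\mathcal{G}) = \textbf{J}\mathcal{G} \cap \textbf{A}_p$ is exactly the subgroup the paper calls $\mathcal{U}$, simple and transitive of degree $p$ by Proposition \ref{core}(3). The genuine gap is the splitting step: the coprimality $\gcd(|\mathcal{K}|,|\mathcal{G}/\mathcal{K}|)=1$ that you need for Schur--Zassenhaus is simply false in the cases that make this theorem interesting. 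In the non-solvable case $q$ typically divides $|\mathcal{U}|$: the paper's own Example (3) has $\mathcal{G} \cong \mathbb{Z}_2^6 \rtimes \mathbf{A}_7$ with $q=2$ and $|\mathbf{A}_7| = 2520$ even; Examples (4)--(6) have $q=2$ and $\mathcal{U} = \mathbf{PSL}(3,2)$ of order $168$; and every entry of Corollary \ref{corollary} has $q$ dividing $|\mathcal{U}|$ (e.g.\ $q=3$ divides $|\mathbf{A}_5|=60$, $q=2$ divides $|\mathbf{M}_{11}|$, and $q \mid \mathfrak{q}-1$ divides $|\mathbf{PSL}(n,\mathfrak{q})|$ via the factor $\mathfrak{q}^2-1$). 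So $\mathcal{K}$ is a normal $q$-subgroup but emphatically \emph{not} a Hall subgroup nor a full Sylow $q$-subgroup of $\mathcal{G}$; the point you flagged as ``delicate'' --- that the $q$-part of $\mathcal{G}$ is absorbed by $\mathcal{K}$ --- is not delicate but wrong, and without coprimality the extension $1 \to \mathcal{K} \to \mathcal{G} \to \mathcal{U} \to 1$ has no a priori reason to split, since $H^2(\mathcal{U},\mathcal{K})$ need not vanish when $q$ divides $|\mathcal{U}|$.

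The splitting is instead a consequence of the permutation-theoretic embedding, and that is how the paper proves it: since $\mathcal{K} = \mathcal{G} \cap \textbf{J}$ sits inside the abelian normal subgroup $\textbf{J}$ of the split ambient group $\textbf{T} = \textbf{J} \rtimes \textbf{A}_p$, one forms $\textbf{J}\mathcal{G} = \textbf{J} \rtimes \mathcal{U}$ with $\mathcal{U} = \textbf{J}\mathcal{G} \cap \textbf{A}_p$, observes that $\mathcal{K} \unlhd \textbf{J}\mathcal{G}$ and $\mathcal{K} \cap \mathcal{U} = \{1\}$ force $\mathcal{K}\,\mathcal{U} = \mathcal{K} \rtimes \mathcal{U} \leq \textbf{J}\mathcal{G}$, and then writes each $g \in \mathcal{G}$ as $g = k\,a_j\,b_j$ (with fixed coset representatives $h_j = a_j b_j$ of $\mathcal{K}$ in $\mathcal{G}$, $a_j \in \textbf{J}$, $b_j \in \mathcal{U}$) and verifies directly that $g \mapsto k\,b_j$ is an injective homomorphism, hence by order count an isomorphism $\mathcal{G} \cong \mathcal{K} \rtimes \mathcal{U}$. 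Note also that the theorem asserts only an abstract isomorphism; the internal complement inside $\mathcal{G}$ (which Schur--Zassenhaus would have produced directly) is recovered afterwards by transporting $\mathcal{U}$ back through this isomorphism, as the paper remarks following the proof. Your argument does go through verbatim in the solvable case $\mathcal{U} \cong \mathbb{Z}_p$, and more generally whenever $q \nmid |\mathcal{U}|$ (e.g.\ $q > p$ with $\mathcal{U} = \mathbf{A}_p$), but to cover the general case you must replace the Hall/Schur--Zassenhaus mechanism by an argument exploiting that $\mathcal{K}$ lies in the complemented abelian normal subgroup $\textbf{J}$ --- which is precisely the paper's route.
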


\begin{proof}
	Since $ \; \textbf{J} \unlhd \textbf{T} :=  \textbf{J} \rtimes \textbf{A}_p $ and since it follows from Proposition \ref{sobreK} (2) that  $ \mathcal{G} \leq \textbf{T} $, then $ \;  \textbf{J}\mathcal{G}  \leq \textbf{T}$.
	
	Applying Proposition \ref{sobreK} (4), we obtain
	
	$$ \textbf{J} \mathcal{G}/\textbf{J} \cong  \mathcal{G}/( \mathcal{G} \cap \textbf{J}) =  \mathcal{G}/\mathcal{K}\, .
	$$

	Hence	
	$$
	\textbf{J} \mathcal{G} = \textbf{J} \rtimes (\textbf{J} \mathcal{G} \cap {\bf A }_p) \; \; \; \;  \mbox{and}  \; \; \; \;  \textbf{J} \mathcal{G} \cap \textbf{A}_p \cong ( \textbf{J} \mathcal{G} \cap \textbf{A}_p)/ ( \textbf{J} \mathcal{G} \cap \textbf{A}_p \cap {\textbf J} )  \cong \textbf{J} \mathcal{G}/\textbf{J} \cong    \mathcal{G}/{\mathcal K} \, ,
	$$
	and therefore $\mathcal{U}:= \; \textbf{J} \mathcal{G} \cap \textbf{A}_p \, \cong \mathcal{G}/{\mathcal K}  $ is
	a simple  transitive  group of  degree $p$, as we know from Proposition \ref{core}.
	
	\medskip
	
	To complete the first part of the proof, we will now verify that $\mathcal{G} \cong {\mathcal K}  \rtimes \mathcal{U}$ by first showing that ${\mathcal K} \mathcal{U} = {\mathcal K} \rtimes \mathcal{U} \leq  \textbf{J} \mathcal{G}$ and then exhibiting an explicit isomorphism from $\mathcal{G}$ to ${\mathcal K} \mathcal{U}$.
	
	\medskip
	
	Since ${\mathcal K}$ is normal in $\mathcal{G}$ and in the abelian group $\textbf{J}$, it follows that $ {\mathcal K} \unlhd \textbf{J} \mathcal{G}$ and hence  $ {\mathcal K} \, \mathcal{U} = \; {\mathcal K} (\textbf{J} \mathcal{G} \cap \textbf{A}_p) \leq  \textbf{J} \mathcal{G}$. But then
	$$
	{\mathcal K} \mathcal{U}/{\mathcal K} = {\mathcal K}(\textbf{J} \mathcal{G} \cap \textbf{A}_p)/{\mathcal K} \cong \textbf{J} \mathcal{G} \cap \textbf{A}_p/ ( \textbf{J}\mathcal{G} \cap \textbf{A}_p \cap {\mathcal K}) \cong  \textbf{J}\mathcal{G} \cap \textbf{A}_p = \mathcal{U},
	$$
	since $\mathcal{U} \cap {\mathcal K} =\textbf{J}\mathcal{G} \cap \textbf{A}_p \cap {\mathcal K} =1$ as ${\mathcal K} \leq \textbf{J}$.  Therefore ${\mathcal K} \, \mathcal{U} = {\mathcal K} \rtimes \mathcal{U}$.
	
	\medskip
	
	To write down an isomorphism from $\mathcal{G}$ to ${\mathcal K} \mathcal{U}$, first decompose  $ \; \mathcal{G} \; $ as a disjoint union of right cosets of ${\mathcal K}$: $ \; \mathcal{G} = {\mathcal K}h_{1} \cup {\mathcal K}h_{2} \cup \cdots \cup {\mathcal K}h_r$, with $h_j \in \mathcal{G}$.
	Since $ \; \mathcal{G} \leq \textbf{J} \mathcal{G} =  \textbf{J} \rtimes \mathcal{U}$, each $h_j$  can be written as $ \; h_j = a_j\, b_j \; $ with unique $ \; a_j \in \textbf{J} \; \; $ and  $ \; \; b_j \in \mathcal{U}$.
	
	Hence each $ \; g \in \mathcal{G} \; $  may be written as  $$ \; g = k\, a_j \,b_j\; $$ with  $ \; \; k \in {\mathcal K}, \; \; a_j \in \textbf{J} \; \; $ and  $ \;  b_j \in \mathcal{U}$.
	
	Now consider  $ \; \phi : \mathcal{G} \to  {\mathcal K} \mathcal{U} $ defined by  $ \; \phi(g) = \phi(k\, a_j \, b_j)  = k\, b_j$.
	
	Let $\; g_1 = k_1a_jb_j$ and  $g_2 = k_2a_sb_s$ be in $\mathcal{G}$. Then, using that  $\mathcal{K}$ and $\textbf{J}$ are normal subgroups of $\textbf{J} \mathcal{G} = \textbf{J} \rtimes \mathcal{U}$, we see that	
	 $$ \; g_1g_2 = k_1a_jb_jk_2a_sb_s = k_1a_j(k_2)^{\prime}b_ja_sb_s = k_1(k_2)^{\prime}a_jb_ja_sb_s = k_1(k_2)^{\prime}a_j(a_s)^{\prime}b_jb_s
	$$
	and hence
	$$\phi(g_1g_2) = k_1(k_2)^{\prime}b_jb_s .
	$$
	
	But
	$$\phi(g_1)\phi(g_2) = k_1b_jk_2b_s = k_1(k_2)^{\prime}b_jb_s
	$$
	and we conclude that $ \; \phi \; $ is a group homomorphism.
	
	Observe that, if  $ \; \phi(g) = kb_j = 1 \; $ then $ \; k^{-1} = b_j \in {\mathcal K} \cap  \mathcal{U} =  \{ 1 \}$. Hence  $ \; \phi \; $ is injective.
	
	Finally, since $ \; \vert  {\mathcal K} \mathcal{U}  \vert = \vert  \mathcal{G}\vert$, we have obtained
	$$
	\mathcal{G}   \cong {\mathcal K} \rtimes \mathcal{U}.
	$$
\end{proof}

Recall that we have proven that $\mathcal{G}  \cong  \mathcal{K} \rtimes \mathcal{U}$, with $\mathcal{K} = \mathcal{H}_{\mathcal{G}}$ isomorphic to a subgroup of $\mathbb{Z}_q^{p-1}$ and $\mathcal{U}$ a transitive simple group of degree $p. \; $
From now on we  use the same letter $\mathcal{U}$ for the corresponding subgroup of $\mathcal{G}$ such that  $\mathcal{G}  = {\mathcal K}\: \mathcal{U}$ and $ \; {\mathcal K} \cap \mathcal{U} = \{ 1 \}$.

\section{The case $\mathcal{X} \to \mathbb{P}^1$ is a cyclic $p$-fold cover}

In order to prove part i) of the Main Theorem, we now consider the case when $\mathcal{X} \to \mathbb{P}^1$ is a cyclic $p$-fold cover.

It is clear that $\mathcal{Z}_{\mathcal{H}}\cong \mathcal{X} \xrightarrow {\quad \varphi}  \mathcal{Z}_{\mathcal{G}}\cong\mathbb{P}^{1}$
 is a cyclic $p$-fold cover if and only if $\mathcal{H}$ is a normal subgroup of $\mathcal{G}$, and this in turn is equivalent to $\mathcal{H} = \mathcal{K}$; to complete the proof of part i) we now prove the following result.

\begin{thm}
	$\mathcal{G}$ is solvable if and only if  $\mathcal{H}$ is normal in $\mathcal{G}$.
	\vspace{2mm}\\
	In this case $ \; {\mathcal U} \cong {\mathbb Z}_p \; $ and
	$$
	\mathcal{G} \cong \mathbb{Z}_q^s \rtimes \mathbb{Z}_p
	$$
	with $1 \leq s \leq p-1$.
	\vspace{2mm}\\
	Furthermore, $\varphi \circ \psi $ is a Galois cover if and only if  
	$$\mathcal{G} = \langle a, b : a^q = b^p =1 , b^{-1}ab = a^r\rangle \cong \mathbb{Z}_q \rtimes \mathbb{Z}_p
	$$
	with $p/(q-1)$ and $ 1 \ne r$ a primitive $p$-th root of unity in the field of $q$ elements.

\end{thm}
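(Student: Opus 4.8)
The plan is to reduce every assertion to the structural fact, established above, that $\mathcal{G} \cong \mathcal{K} \rtimes \mathcal{U}$ with $\mathcal{K} \cong \mathbb{Z}_q^s$ abelian and $\mathcal{U} \cong \mathcal{G}/\mathcal{K}$ a \emph{simple} transitive permutation group of prime degree $p$. Since $\mathcal{K}$ is abelian, hence solvable, $\mathcal{G}$ is solvable if and only if the quotient $\mathcal{U} = \mathcal{G}/\mathcal{K}$ is solvable. But a simple group is solvable precisely when it is cyclic of prime order, and a transitive group of degree $p$ that is cyclic of prime order must have order exactly $p$ (by orbit--stabilizer its order is divisible by $p$). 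Thus the first step is to record the chain $\mathcal{G}$ solvable $\iff \mathcal{U} \cong \mathbb{Z}_p \iff |\mathcal{G} : \mathcal{K}| = p$.

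Next I would prove the equivalence with the normality of $\mathcal{H}$ by a comparison of indices. If $\mathcal{H} \unlhd \mathcal{G}$, then $\mathcal{H}$ equals its own core, so $\mathcal{H} = \mathcal{H}_{\mathcal{G}} = \mathcal{K}$; hence $\mathcal{G}/\mathcal{K} = \mathcal{G}/\mathcal{H}$ has order $|\mathcal{G} : \mathcal{H}| = p$ by Proposition \ref{prop:Gclosure}(3), so $\mathcal{U} \cong \mathbb{Z}_p$ and $\mathcal{G}$ is solvable. Conversely, if $\mathcal{G}$ is solvable then $|\mathcal{G} : \mathcal{K}| = p$ by the chain above; since $\mathcal{K} \leq \mathcal{H}$ and $|\mathcal{G} : \mathcal{H}| = p$, multiplicativity of indices forces $|\mathcal{H} : \mathcal{K}| = 1$, that is $\mathcal{H} = \mathcal{K}$, which is normal in $\mathcal{G}$ because a core is always normal. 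This simultaneously yields $\mathcal{G} \cong \mathcal{K} \rtimes \mathbb{Z}_p \cong \mathbb{Z}_q^s \rtimes \mathbb{Z}_p$ and $\mathcal{U} \cong \mathbb{Z}_p$. That $s \geq 1$ follows because $\mathcal{H} = \mathcal{K} \cong \mathbb{Z}_q^s$ must satisfy $|\mathcal{H} : \mathcal{N}| = q$ by Proposition \ref{prop:Gclosure}(5), so $|\mathcal{K}| = q^s \geq q$; together with the general bound $s \leq p-1$ this gives $1 \leq s \leq p-1$.

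For the Galois statement I would first observe that $\mathcal{Y} = \mathcal{Z}_{\mathcal{N}} \to \mathbb{P}^1 = \mathcal{Z}_{\mathcal{G}}$ is Galois if and only if $\mathcal{N} \unlhd \mathcal{G}$, and by Proposition \ref{prop:Gclosure}(2) this occurs exactly when $\mathcal{N} = \{1\}$; hence $\varphi \circ \psi$ is Galois iff $\mathcal{N} = \{1\}$. If $\mathcal{N} = \{1\}$, then $|\mathcal{H}| = |\mathcal{H} : \mathcal{N}| = q$ and $|\mathcal{G}| = p\,|\mathcal{H}| = pq$, so $\mathcal{G}$ has order $pq$ and is in particular solvable; by the previous paragraph $\mathcal{H} = \mathcal{K} \cong \mathbb{Z}_q$ (forcing $s = 1$) and $\mathcal{G} \cong \mathbb{Z}_q \rtimes \mathbb{Z}_p$. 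Conversely, if $\mathcal{G} \cong \mathbb{Z}_q \rtimes \mathbb{Z}_p$ has order $pq$, then $|\mathcal{H}| = |\mathcal{G}|/p = q$ and $|\mathcal{N}| = |\mathcal{H}|/q = 1$, so $\varphi \circ \psi$ is Galois. It then remains to pin down the presentation: writing $\mathcal{G} = \langle a \rangle \rtimes \langle b \rangle$ with $|a| = q$, $|b| = p$ and $b^{-1} a b = a^r$, the conjugation action is a homomorphism $\mathbb{Z}_p \to \Aut(\mathbb{Z}_q) \cong \mathbb{Z}_{q-1}$; since the Sylow $p$-subgroup $\mathcal{P} = \langle b \rangle$ is not normal in $\mathcal{G}$ by Proposition \ref{core}(2), this action is nontrivial, hence injective, which forces $p \mid (q-1)$ and makes $r$ an element of multiplicative order $p$ in $\mathbb{F}_q^{*}$, i.e.\ a primitive $p$-th root of unity with $r \neq 1$.

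I expect the genuinely delicate points to be bookkeeping rather than deep: making sure the index comparison $\mathcal{K} \leq \mathcal{H}$ with $|\mathcal{G}:\mathcal{K}| = |\mathcal{G}:\mathcal{H}| = p$ really collapses $\mathcal{H}$ onto $\mathcal{K}$, and correctly translating the non-normality of the Sylow $p$-subgroup into nontriviality (hence injectivity) of the action $\mathbb{Z}_p \to \Aut(\mathbb{Z}_q)$ so as to extract the arithmetic condition $p \mid (q-1)$. Everything else is a direct consequence of the semidirect decomposition $\mathcal{G} \cong \mathcal{K} \rtimes \mathcal{U}$ combined with the elementary fact that a simple solvable group is cyclic of prime order.
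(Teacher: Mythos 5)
Your proposal is correct and follows essentially the same route as the paper's proof: both deduce $\mathcal{U} \cong \mathbb{Z}_p$ from the simplicity and solvability of $\mathcal{G}/\mathcal{K}$ together with $p \mid \vert \mathcal{G} : \mathcal{K} \vert$, collapse $\mathcal{H}$ onto $\mathcal{K}$ by comparing the indices $\vert \mathcal{G} : \mathcal{H} \vert = \vert \mathcal{G} : \mathcal{K} \vert = p$, and settle the Galois statement via the chain $\mathcal{N} = \{1\} \iff \vert \mathcal{G} \vert = pq$ combined with the non-normality of the Sylow $p$-subgroup from Proposition \ref{core}. Your only departures are expository --- you spell out the orbit--stabilizer argument, the bound $s \geq 1$ via $\vert \mathcal{H} : \mathcal{N} \vert = q$, and the derivation of $p \mid (q-1)$ from injectivity of the map $\mathbb{Z}_p \to \Aut(\mathbb{Z}_q)$, all of which the paper compresses into citations of standard facts.
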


\begin{proof}
	If $\mathcal{G}$ is solvable, then  $ \;  \mathcal{G}/ \mathcal{K} \cong \mathcal{U} $ is a solvable and simple group;  hence, the order of  $ \;  \mathcal{G}/ \mathcal{K} \; $ is a prime number. Since $ \; p = \vert  \mathcal{G} : \mathcal{H} \vert \; $ divides $ \;  \vert  \mathcal{G}: \mathcal{K} \vert \; $, it follows that $ \;  \mathcal{H} = \mathcal{K}$, so $\mathcal{H}$ is normal in $\mathcal{G}$, and that $\mathcal{U} \cong \mathbb{Z}_p$. Also note that since $\mathcal{H} = {\mathcal K} \; $ is non trivial, it is isomorphic to $\mathbb{Z}_q^s$ for some $1 \leq s \leq p-1$.
	
	\medskip

	If $ \;  \mathcal{H} \unlhd \mathcal{G},  \; $ then $ \;  \mathcal{H} =  \mathcal{K} \; \; $ is an abelian $ \; q$-group and $ \;  \mathcal{G}/\mathcal{H} \; $ has order $ \; p$; in particular, it is an abelian group. Hence  $ \; \mathcal{G} \; $ is a solvable group. But we know from Proposition \ref{core} that $ \; \mathcal{G} = \mathcal{H} {\mathcal P} , \; $ with $ {\mathcal P} $ a $p$-Sylow subgroup of $ \mathcal{G}$, and hence
	$$
	\mathcal{G} \cong \mathbb{Z}_q^s \rtimes \mathbb{Z}_p
	$$
	with $1 \leq s \leq p-1$.
	
	\medskip
	
	To finish the proof, observe that $\varphi \circ \psi $ is a Galois cover if and only if $ \; {\mathcal N} = \{ 1\} \; $  if and only if $ \; \vert {\mathfrak G} \vert = pq \; $, since $ \;   \vert {\mathfrak G} \vert =  \vert {\mathfrak G} : {\mathcal H} \vert  \vert {\mathcal H} : {\mathcal N} \vert \vert {\mathcal N} \vert = p \cdot q \cdot \vert {\mathcal N} \vert$.
	
	Since we know from Proposition \ref{core} that   a $p$-Sylow subgroup of $ \mathcal{G}$ is not normal in $\mathcal{G}$, $\mathcal{G}$ must be the unique non abelian group of order $pq$, given by
			$$ G = \langle a, b : a^q = b^p =1 , b^{-1}ab = a^r\rangle \cong \mathbb{Z}_q \rtimes \mathbb{Z}_p
		$$
	as claimed.	
\end{proof}

\section{The non-Solvable case} \label{S:ns}

We have already shown that $\mathcal{G}$ is solvable if and only if $\mathcal{H}$ is normal in $\mathcal{G}$ if and only if $\mathcal{X} \xrightarrow {\quad \varphi} \mathbb{P}^{1}$ is a cyclic $p$-fold cover.

\medskip

In this section we assume that $\mathcal{G}$ is not solvable, or, equivalently, that $\mathcal{K} \subsetneq \mathcal{H}$, and prove part ii) of the main Theorem.
\medskip

A key ingredient is the following result.

\begin{thm} (Guralnick \cite{GU})\label{guralnick}
Let $ \; G \; $  be a simple non-abelian   transitive  group of
prime degree $ \; p \; $ and $H$ a subgroup of $G$ of index $p$.  Then $ \; G \; $  is one of the following groups:
\begin{enumerate}
\item  $ \; G  = {\mathbf A}_p \; $ with $p \geq 5$ and  $ \; H = {\mathbf A}_{p-1}$;

\medskip

\item  $ \; G = \mathbf{PSL}(2, 11) \; $  with $ \; p = 11 \; $ and $ \; H = {\mathbf A}_5 $;

\medskip

\item  $ \; G = {\mathbf M}_{11} \; $ with $ \; p = 11 \; $ and $ \; H = {\mathbf M}_{10} \; \; $ or $ \; G = {\mathbf M}_{23} \; $  with $ \; p = 23 \; $ and $ \; H = {\mathbf M}_{22}$;

\medskip

\item  $ \; G  = \mathbf{ PSL}(n,  \mathfrak{q}) \; $ with    $ \; H \; $  is the stabilizer of a point or a hyperplane of $ \; \mathbb{F}^{n}_{ \mathfrak{q}}. \; $  Then $ \;  \vert G : H \vert  = \displaystyle\frac{\mathfrak{q}^n - 1}{\mathfrak{q} - 1}  = p \; $;  $ \; n \; $ is a prime and $ \; \mathfrak{q} \; $ is a power of a prime number.
\end{enumerate}
\end{thm}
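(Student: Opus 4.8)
The plan is to convert the hypothesis into the language of permutation groups and then appeal to the classification of finite $2$-transitive groups. First I would note that, since $H$ has index $p$ in the simple non-abelian group $G$, its core $\core_G(H)$ is a proper normal subgroup and hence trivial; consequently the action of $G$ on the right cosets of $H$ is faithful and exhibits $G$ as a transitive subgroup of $\mathbf{S}_p$. So the task is exactly to list the simple non-abelian transitive groups of prime degree $p$ together with the (conjugacy class of the) point stabilizer $H$.

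The first real step is Burnside's classical theorem that a transitive group of prime degree $p$ is either $2$-transitive or permutation isomorphic to a subgroup of the one-dimensional affine group $\mathrm{AGL}(1,p) \cong \mathbb{Z}_p \rtimes \mathbb{Z}_{p-1}$. Every such affine group is solvable, so the second alternative is impossible for a simple non-abelian $G$; therefore $G$ is $2$-transitive. I would then invoke the CFSG-based classification of finite $2$-transitive groups, which sorts them into affine type, with elementary abelian regular socle of order $\ell^d$ and degree $\ell^d$, and almost simple type, with simple socle $T$ and $T \leq G \leq \Aut(T)$. Prime degree forces $d=1$ in the affine case, returning a solvable subgroup of $\mathrm{AGL}(1,p)$, which is excluded; hence $G$ is almost simple, and because $G$ is itself simple it must equal its socle $T$, a simple group admitting a $2$-transitive action of prime degree.

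It then remains to read off from the finite list of $2$-transitive almost simple groups precisely those whose degree is prime, and to record their point stabilizers. The natural action of $\mathbf{A}_n$ on $n$ points contributes $n = p \geq 5$ with stabilizer $\mathbf{A}_{p-1}$. The groups $\mathbf{PSL}(n,\mathfrak{q})$ acting on the points (or dually the hyperplanes) of $\mathbb{P}^{n-1}(\mathbb{F}_{\mathfrak{q}})$ have degree $(\mathfrak{q}^n-1)/(\mathfrak{q}-1)$; a short arithmetic lemma shows that a proper factorization $n = ab$ would make $(\mathfrak{q}^a-1)/(\mathfrak{q}-1)$ a nontrivial proper divisor of this degree, so primality of the degree forces $n$ to be prime (the case $n=2$, degree $\mathfrak{q}+1$, being the natural action on a projective line), and the stabilizer is the maximal parabolic fixing a point or a hyperplane. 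The surviving sporadic and exceptional cases are $\mathbf{PSL}(2,11)$ on $11$ points with stabilizer $\mathbf{A}_5$, $\mathbf{M}_{11}$ on $11$ points with stabilizer $\mathbf{M}_{10}$, and $\mathbf{M}_{23}$ on $23$ points with stabilizer $\mathbf{M}_{22}$, matching the list in the statement.

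The main obstacle is that the argument is not self-contained: the classification of finite $2$-transitive groups rests on CFSG, so the sensible course is to cite it rather than reprove it, which is exactly why the result is attributed to Guralnick \cite{GU}. Beyond the citation, the only genuine work is the bookkeeping of prime degrees in the known list: verifying the elementary lemma that $(\mathfrak{q}^n-1)/(\mathfrak{q}-1)$ prime implies $n$ prime, and checking that the remaining $2$-transitive families (unitary, symplectic, Suzuki and Ree groups) and the composite-degree sporadic actions, such as those of degree $12$, $22$ and $24$, yield no further examples of prime degree.
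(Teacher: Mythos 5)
Your proposal is correct in substance, but there is no internal proof to measure it against: the paper treats Theorem \ref{guralnick} as a quoted external result, attributed to Guralnick \cite{GU}, and gives no argument whatsoever. Your route is the classical one for prime degree: trivial core of $H$ by simplicity, hence a faithful transitive action of degree $p$; Burnside's theorem forcing $2$-transitivity (the affine alternative being solvable); the CFSG list of $2$-transitive groups, with the affine type excluded and $G$ equal to its socle in the almost simple type; and then the bookkeeping of prime degrees, including the lemma that $(\mathfrak{q}^n-1)/(\mathfrak{q}-1)$ prime forces $n$ prime and the elimination of the unitary, symplectic, Suzuki, Ree and composite-degree sporadic actions --- all of which checks out (e.g.\ $q^3+1=(q+1)(q^2-q+1)$ and $q^2+1=(q+\sqrt{2q}+1)(q-\sqrt{2q}+1)$ for Suzuki degrees are composite). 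By contrast, Guralnick's cited theorem is strictly more general --- it classifies subgroups of prime \emph{power} index $p^a$ in simple groups, with an additional case $\mathbf{PSU}(4,2)$ of index $27$ that disappears when $a=1$ --- and his proof runs through the simple groups directly rather than through Burnside plus the $2$-transitive classification; your approach buys immediate $2$-transitivity but only covers $a=1$. One point worth making explicit in your last step: passing through the list of \emph{all} $2$-transitive actions is what guarantees that every conjugacy class of index-$p$ subgroups is accounted for (points versus hyperplanes in $\mathbf{PSL}(n,\mathfrak{q})$, and the two classes of $\mathbf{A}_5$ in $\mathbf{PSL}(2,11)$), since a priori $H$ is an arbitrary subgroup of index $p$, not a stabilizer in some given action. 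Given that both routes ultimately rest on CFSG, citing the literature, as the paper does and as you propose, is the right call.
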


An immediate consequence of this result is the following.

\begin{cor}\label{corollary}
Let $ \; G \; $  be a simple non-abelian transitive  group of
prime degree $ \; p \; $  and $ \; H \leq G \; $ with $ \; \vert G : H \vert = p$. If there exists $ \; N \unlhd H \; $ with $ \; \vert H : N \vert  = q \; $ a prime number and $ \; q \ne p, \; $  then $ \; G \; $  is one of the following groups:
\begin{enumerate}
\item  $ \; G  = \mathbf{A}_5 \; $,  $p=5$ and $ \; q = 3$.\\
\item  $ \; G = {\mathbf M}_{11} \; $,  $p=11$ and $ \; q = 2$.\\
\item  $ \; G  = \mathbf{ PSL}(n,  \mathfrak{q}) \; $  with  $ \; p = \displaystyle\frac{\mathfrak{q}^n - 1}{\mathfrak{q} - 1} \; $  where $ \; n \; $  is prime and
\begin{itemize}
\item if $ \;  \mathfrak{q}  > 2,  \; \; $ then $ \; q \; $ is any prime  divisor of  $ \; \mathfrak{q} -1$;
\item if $ \; \mathfrak{q} = 2 \; \; $ then $ \; n = 3  \; $, $p=7$  and  $ \; q = 2$.
\end{itemize}
\end{enumerate}
\end{cor}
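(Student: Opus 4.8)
The plan is to deduce the Corollary from Theorem~\ref{guralnick} by reducing its hypothesis to a statement about abelianizations. The key observation is that a finite group $H$ has a normal subgroup $N$ with $|H:N|=q$ a prime if and only if $q$ divides $|H^{\mathrm{ab}}|$, where $H^{\mathrm{ab}}:=H/[H,H]$: any quotient of prime order $q$ is cyclic and so factors through $H^{\mathrm{ab}}$, and a finite abelian group has a quotient of order $q$ precisely when $q$ divides its order. So I would run through the four families of pairs $(G,H)$ in Theorem~\ref{guralnick}, compute $H^{\mathrm{ab}}$ in each, and retain exactly those primes $q\neq p$ dividing it; the pairs admitting no such $q$ are discarded.

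For the alternating and sporadic families the computation is immediate. When $G=\mathbf{A}_p$ and $H=\mathbf{A}_{p-1}$, the subgroup $H$ is simple non-abelian, hence perfect, as soon as $p\geq 7$, so no admissible $q$ exists; the only survivor is $p=5$, where $H=\mathbf{A}_4$ has $\mathbf{A}_4^{\mathrm{ab}}\cong\mathbb{Z}_3$, giving $q=3$ and entry (1). Similarly $\mathbf{PSL}(2,11)$ with $H=\mathbf{A}_5$ and $\mathbf{M}_{23}$ with $H=\mathbf{M}_{22}$ are discarded, since $\mathbf{A}_5$ and $\mathbf{M}_{22}$ are perfect. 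For $\mathbf{M}_{11}$ with $H=\mathbf{M}_{10}$ the normal subgroup $\mathbf{A}_6\unlhd\mathbf{M}_{10}$ of index $2$ (with $\mathbf{A}_6$ perfect) yields $\mathbf{M}_{10}^{\mathrm{ab}}\cong\mathbb{Z}_2$, hence $q=2$ and entry (2).

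The substantive case is $G=\mathbf{PSL}(n,\mathfrak{q})$ with $H$ the stabilizer of a point or, dually, a hyperplane of $\mathbb{F}_\mathfrak{q}^n$; the transpose-inverse duality identifies the two abelianizations, so it suffices to treat the point stabilizer. Here $H$ is the image in $\mathbf{PSL}(n,\mathfrak{q})$ of a maximal parabolic of $\mathbf{SL}(n,\mathfrak{q})$ with Levi decomposition $U\rtimes L$, where $U\cong\mathbb{F}_\mathfrak{q}^{n-1}$ is the abelian unipotent radical and $L\cong\mathbf{GL}(n-1,\mathfrak{q})$. Because $L$ acts on $U$ as a (twisted) natural module, which is irreducible and nontrivial, one gets $[L,U]=U$, so $U\leq[H,H]$ and $H^{\mathrm{ab}}$ is a quotient of $L^{\mathrm{ab}}$. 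For $\mathfrak{q}>2$ the determinant gives $\mathbf{GL}(n-1,\mathfrak{q})^{\mathrm{ab}}\cong\mathbb{F}_\mathfrak{q}^*\cong\mathbb{Z}_{\mathfrak{q}-1}$, and the scalar subgroup of $\mathbf{SL}(n,\mathfrak{q})$ maps trivially to this quotient because $\gcd(n,\mathfrak{q}-1)=1$. This last coincidence is itself a short number-theoretic argument: since $p=1+\mathfrak{q}+\cdots+\mathfrak{q}^{n-1}\equiv n\pmod{\mathfrak{q}-1}$, a common prime divisor of $n$ and $\mathfrak{q}-1$ would (as $n$ is prime) force $n\mid p$, hence $n=p$, contradicting $p>n$. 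Thus $H^{\mathrm{ab}}\cong\mathbb{Z}_{\mathfrak{q}-1}$, so $q$ ranges over the prime divisors of $\mathfrak{q}-1$, each satisfying $q\leq\mathfrak{q}-1<p$ so that $q\neq p$ automatically; this is entry (3) for $\mathfrak{q}>2$. When $\mathfrak{q}=2$ one has $\mathbb{F}_2^*=\{1\}$, so $H^{\mathrm{ab}}$ is trivial unless the Levi $\mathbf{GL}(n-1,2)$ fails to be perfect; among the simple $\mathbf{PSL}(n,2)$ this occurs only for $n=3$, where $\mathbf{GL}(2,2)\cong\mathbf{S}_3$ and $H\cong\mathbf{S}_4$ gives $H^{\mathrm{ab}}\cong\mathbb{Z}_2$, so $n=3$, $p=7$, $q=2$.

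The main obstacle is concentrated in this last case: one must correctly compute the abelianization of the maximal parabolic $H\leq\mathbf{PSL}(n,\mathfrak{q})$ for every admissible $(n,\mathfrak{q})$, and in particular isolate the small-field exceptions in which $\mathbf{GL}(n-1,\mathfrak{q})$ is not perfect (the $\mathbf{SL}(2,2)$ phenomenon), while verifying that the centre of $\mathbf{SL}(n,\mathfrak{q})$ never interferes --- this is exactly the role of the coincidence $\gcd(n,\mathfrak{q}-1)=1$. Once these abelianizations are in hand, reading off the admissible primes $q\neq p$ in each family reproduces the list in the Corollary.
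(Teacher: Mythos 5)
Your proposal is correct and follows essentially the same route as the paper: a case-by-case filter of Guralnick's list, where the alternating and sporadic cases are settled by (near-)simplicity of $H$, and the $\mathbf{PSL}(n,\mathfrak{q})$ case is settled by identifying $[H,H]$ with the subgroup $U\rtimes\mathbf{SL}(n-1,\mathfrak{q})$ so that $H^{\mathrm{ab}}\cong\mathbb{Z}_{\mathfrak{q}-1}$, with $\mathfrak{q}=2$ forcing $n=3$ --- exactly the paper's computation with its explicit matrix subgroup $M=H'$, merely rephrased in abelianization/parabolic language. The only genuine addition is that you prove the coprimality $\gcd(n,\mathfrak{q}-1)=1$ via $p\equiv n \pmod{\mathfrak{q}-1}$, which the paper asserts without argument.
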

\begin{proof}
We check which of the groups listed in Theorem \ref{guralnick} satisfy the hypothesis of the corollary.

\begin{enumerate}
\item If $ \; G  = {\mathbf A}_p,  \; $ with $ \; p \geq 5, \; $ then   $ \; H = {\mathbf A}_{p-1}. \; $ Since for all $ \; p > 6 \; $ the alternating group $ \; H = {\mathbf A}_{p-1} \; $ is a simple group and $ \; \{1 \} \ne N \unlhd H$, we obtain $ \; p = 5$,  $H = {\mathbf A}_4 \; $ and $ \; N \; $ is the Sylow $2$-subgroup of $ \; H$. Hence $ \; q = 3.$

\medskip

\item It is clear that neither $ \;   G = \mathbf{PSL}(2, 11) \; $  and  $ \; H = {\mathbf A}_5 $  nor  $ \; G = {\mathbf M}_{23} \; $ and $ \; H = {\mathbf M}_{22} \; $  satisfy the hypothesis in the corollary.
\vspace{2mm}\\
If  $ \; G = {\mathbf M}_{11}, \; $then $ \; H = {\mathbf M}_{10}$. Since the only non-trivial normal subgroup of $ \; H \; $ is $ \; N = H^{\prime} = {\mathbf A}_6 \; $, we obtain $ \; q =  \vert H : N \vert = 2$.

\medskip

\item Let $ \; G  = \mathbf{ PSL}(n,  \mathfrak{q}) \; $ with $ \;  \vert G : H \vert  = \displaystyle\frac{\mathfrak{q}^n - 1}{\mathfrak{q} - 1}  = p. \; $ Then $ \; (n, \mathfrak{q} -1) = 1 \;$ and  $ \; G  = \mathbf{ PSL}(n,  \mathfrak{q})  =  \mathbf{SL}(n,  \mathfrak{q}).\; $
\vspace{2mm}\\
We know that
$$ H = \left\{\left( \begin{array}{cc} A & 0 \\
\omega & \kappa  \end{array}\right)  \; \; / \; \;  A \in \mathbf{ GL}(n-1, \mathfrak{q}), \; \; \kappa = \det(A^{-1}), \; \omega \in {\mathbb F}_{\mathfrak{q}}^{n-1}  \right\} .$$
Consider the normal subgroup of $ \; H \; $ given by
$$ M = \left\{\left( \begin{array}{cc} A & 0 \\
\omega & \kappa  \end{array}\right)  \; \; / \; \;  A \in \mathbf{ SL}(n-1, \mathfrak{q}), \; \; \kappa = \det(A^{-1}), \; \omega \in {\mathbb F}_{\mathfrak{q}}^{n-1}  \right\}.$$
Then $ \; M = H^{\prime} \; $ (the derived group of $H$) and  $ \; \vert H/M \vert = \mathfrak{q} - 1. \; $  Hence, for every $ \; N \unlhd H \; $ with $ \; \vert H : N \vert  = q \; $ a prime number we have that $ \; M  = H^{\prime} \leq N \; $ and $ \; q \; $ divides $ \;  \mathfrak{q} - 1.$
\vspace{2mm}\\
Therefore, for all prime numbers $ \; q \; $ dividing  $ \;  \mathfrak{q} - 1 \; $ there exists a normal subgroup $ \; N \unlhd H \; $ such that $ \; M = H^{\prime} \leq N \; $ and  $ \; \vert H : N \vert  = q. \; $
\vspace{2mm}\\
Finally, if   $ \;  \mathfrak{q} = 2, \; $ then $ \; \mathbf{GL}( n-1,  2) =  \mathbf{SL}( n-1,  2) =  \mathbf{PSL}( n-1,  2). \; $ In this case, if $ \; n > 3, \; $ then $ \; H^{\prime} = H \; $ and $ \; H \; $ has no normal subgroup of prime index. Hence $ \; n = 3  \; $ and  $ \; G = \mathbf{PSL}( 3,  2). \; $ Also $ \; p = 7, \; H = \mathbf {S}_4 \; $ and $ \; N = \mathbf{A}_4 \; $ with $ \; q = 2.$

\end{enumerate}

\end{proof}

Now we are able to prove the second part of the main Theorem.

\begin{thm}    	\label{T:last}
If $ \;  \mathcal{G} \; $ is a non-solvable group, then  $ \; {\mathcal U} \; $  is one of the groups listed in Theorem \ref{guralnick}.
\vspace{2mm}\\
Furthermore,
if $ \; \mathcal{K} = \{ 1 \} \; $   or $ \;  \mathcal{G}  = {\mathcal N} \: \mathcal{U}$    then $ \; \mathcal{U}  \; $ is one of the groups listed in Corollary \ref{corollary}.
\end{thm}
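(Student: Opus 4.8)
The plan is to reduce everything to the abstract group-theoretic statements of Theorem \ref{guralnick} and Corollary \ref{corollary} by transporting the subgroups $\mathcal{H}$ and $\mathcal{N}$ from $\mathcal{G}$ into the complement $\mathcal{U}$. I will use freely that $\mathcal{G} = \mathcal{K}\mathcal{U}$ with $\mathcal{K}\cap\mathcal{U} = \{1\}$ and that $\mathcal{U}\cong\mathcal{G}/\mathcal{K}$ is a simple transitive group of degree $p$ (the semidirect-product structure proved earlier), as well as $\mathcal{K}\leq\mathcal{H}$, $\mathcal{N}\unlhd\mathcal{H}$, $|\mathcal{G}:\mathcal{H}|=p$ and $|\mathcal{H}:\mathcal{N}|=q$ from Proposition \ref{prop:Gclosure} and Proposition \ref{core}.

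\emph{First part.} Since $\mathcal{G}$ is non-solvable, $\mathcal{U}$ cannot be isomorphic to $\mathbb{Z}_p$ (otherwise $\mathcal{G}=\mathcal{K}\rtimes\mathbb{Z}_p$ would be solvable, $\mathcal{K}$ being abelian); hence, as established in Section 4, $\mathcal{U}$ is simple non-abelian. It then remains only to exhibit a subgroup of $\mathcal{U}$ of index $p$. I would set $H := \mathcal{H}\cap\mathcal{U}$ and, using $\mathcal{K}\leq\mathcal{H}$ together with Dedekind's modular law, write $\mathcal{H} = \mathcal{H}\cap\mathcal{K}\mathcal{U} = \mathcal{K}(\mathcal{H}\cap\mathcal{U}) = \mathcal{K}H$; since $\mathcal{K}\cap H \leq \mathcal{K}\cap\mathcal{U} = \{1\}$ this gives $|\mathcal{H}| = |\mathcal{K}|\,|H|$, whence $|\mathcal{U}:H| = |\mathcal{G}:\mathcal{H}| = p$. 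Applying Theorem \ref{guralnick} to the pair $(\mathcal{U},H)$ then yields the required list.

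\emph{Second part.} First note that $\mathcal{K}=\{1\}$ forces $\mathcal{G}=\mathcal{U}$ and hence $\mathcal{G}=\mathcal{N}\mathcal{U}$, so it suffices to argue under the single hypothesis $\mathcal{G}=\mathcal{N}\mathcal{U}$. To invoke Corollary \ref{corollary} I must produce a normal subgroup of $H$ of prime index $q$. I would take $N := \mathcal{N}\cap\mathcal{U}$; since $\mathcal{N}\leq\mathcal{H}$ one has $N = \mathcal{N}\cap H$, and because each $h\in H$ normalizes $H$ and also lies in $\mathcal{H}$ (so normalizes $\mathcal{N}$), the identity $h^{-1}Nh = (h^{-1}\mathcal{N}h)\cap(h^{-1}Hh) = \mathcal{N}\cap H = N$ shows $N\unlhd H$. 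For the index I would again apply Dedekind's law, now with the hypothesis $\mathcal{G}=\mathcal{N}\mathcal{U}$ and $\mathcal{N}\leq\mathcal{H}$, obtaining $\mathcal{H} = \mathcal{H}\cap\mathcal{N}\mathcal{U} = \mathcal{N}(\mathcal{H}\cap\mathcal{U}) = \mathcal{N}H$; the second isomorphism theorem then gives $H/N = H/(H\cap\mathcal{N}) \cong \mathcal{N}H/\mathcal{N} = \mathcal{H}/\mathcal{N}$, so $|H:N| = q$. Corollary \ref{corollary} applied to $(\mathcal{U},H,N)$ produces exactly the restricted list.

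The algebra here (Dedekind's identity and the isomorphism theorems) is mechanical; the one genuinely load-bearing point, and where I would be most careful, is the index computation $|H:N|=q$. Without the hypothesis $\mathcal{G}=\mathcal{N}\mathcal{U}$ the subgroup $N=\mathcal{N}\cap\mathcal{U}$ could coincide with all of $H$ (equivalently $H\leq\mathcal{N}$), in which case $H$ would possess no normal subgroup of index $q$ and Corollary \ref{corollary} could not be applied. Thus the content of the ``furthermore'' is precisely that this hypothesis is what upgrades the coarse list of Theorem \ref{guralnick} to the finer list of Corollary \ref{corollary}, and I would foreground the Dedekind identity $\mathcal{H}=\mathcal{N}(\mathcal{H}\cap\mathcal{U})$ as the crux, verifying that $N$ is a \emph{proper} subgroup of $H$ so that the prime index $q$ is genuinely attained rather than $1$.
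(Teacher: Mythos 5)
Your proposal is correct and takes essentially the same route as the paper: intersect $\mathcal{H}$ and $\mathcal{N}$ with the complement $\mathcal{U}$ and verify the index-$p$, index-$q$ and normality hypotheses of Theorem \ref{guralnick} and Corollary \ref{corollary}, the only difference being bookkeeping --- the paper computes $\vert \mathcal{G} : \mathcal{N}\vert = \vert \mathcal{U} : \mathcal{U}\cap\mathcal{N}\vert = pq$ via the product formula where you use Dedekind's law ($\mathcal{H}=\mathcal{K}(\mathcal{H}\cap\mathcal{U})$, resp.\ $\mathcal{H}=\mathcal{N}(\mathcal{H}\cap\mathcal{U})$) together with the second isomorphism theorem, which is the same computation and in fact fills in details the paper leaves implicit in its terse first part. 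Your folding of the case $\mathcal{K}=\{1\}$ into the case $\mathcal{G}=\mathcal{N}\,\mathcal{U}$ (since $\mathcal{K}=\{1\}$ gives $\mathcal{G}=\mathcal{U}$, hence trivially $\mathcal{G}=\mathcal{N}\,\mathcal{U}$) is a harmless streamlining of the paper's separate treatment of the two cases.
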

\begin{proof}
We know that  $ \;  \mathcal{G}= \mathcal{K}\: {\mathcal U} \; $ and   $ \;  \mathcal{G}/\mathcal{K} \cong {\mathcal U} \; $ is a simple  non-abelian transitive group of degree $ \; p. \; $ Hence, applying  Theorem \ref{guralnick} to the subgroup $ \; {\mathcal U} \; $ we obtain the result, where $ \; {\mathcal H} \cap {\mathcal U} \; $ is the corresponding subgroup of index $p$ in   $ \; {\mathcal U}.$
 \vspace{2mm}\\
If $ \;  \mathcal{K} = \{ 1 \} $, then  $ \;  \mathcal{G} = \mathcal{U} \; $  is a simple non-abelian transitive  group  of prime degree $ \; p. \; $ Hence, if ${\mathcal H}$ and $\mathcal{N}$ are the subgroups of $ \;  \mathcal{G} \; $  corresponding to $\mathcal{X}$ and $\mathcal{Y}$ respectively, then   $ \; \vert  \mathcal{G}  : \mathcal{H} \vert = p, \; \; {\mathcal N} \unlhd {\mathcal H} \; \; $ and $ \; \vert {\mathcal H} : {\mathcal N} \vert = q. \; $
So,  $ \; {\mathcal G} = \mathcal{U}  \; $ satisfies the hypotheses of  Corollary \ref{corollary}.
\vspace{2mm}\\
Finally,  observe that if $ \;  \mathcal{G} =  {\mathcal N}\:\mathcal{U}  \; $ then  $ \; \vert  \mathcal{G} : {\mathcal N} \vert = \vert \mathcal{U} : \mathcal{U} \cap {\mathcal N} \vert = p\, q$. Hence
	$$
	\vert \mathcal{G} : {\mathcal H} \vert = \vert \mathcal{U} : \mathcal{U} \cap {\mathcal H} \vert = p \; \; \; ; \ \; \vert \mathcal{U} \cap {\mathcal H} : \mathcal{U} \cap {\mathcal N} \vert = q \; \; \; \mbox{and}  \; \;  \;  \mathcal{U} \cap {\mathcal N} \unlhd  \mathcal{U} \cap {\mathcal H} .
	$$

	That is,  $ \; \mathcal{U}  \; $ satisfies the hypotheses of  Corollary \ref{corollary}.

\end{proof}

\begin{rem}
	Conversely, for each group listed in Corollary \ref{corollary} (and corresponding values of $p$ and $q$), there are corresponding covers $\mathcal{Y}\xrightarrow {\quad {\psi}} \mathcal{X} \xrightarrow {\quad \varphi} \mathbb{P}^{1}$ with $\psi$ a $q$-fold \'etale cover and $\varphi$ a totally ramified non-cyclic $p$-fold cover for which the monodromy group is that of the list.
	
	We illustrate with the examples in the next section.
	
\end{rem}

\section{Examples}

In what follows we consider the factorized cover
$$
\mathcal{Y}\xrightarrow {\quad {\psi}} \mathcal{X} \xrightarrow {\quad \varphi} \mathbb{P}^{1}
$$
with  $\mathcal{Y}\xrightarrow {\quad {\psi}} \mathcal{X}$  a degree $2$ \'etale cover and a totally ramified degree $7$ cover $\mathcal{X} \xrightarrow {\quad \varphi} \mathbb{P}^{1}$.  Then
$ \; \mathtt{g}_\mathcal{X} = 3 \; \; $ and $ \; \; \mathtt{g}_\mathcal{Y} = 5$.

Recall that $ \; \widetilde{\varphi} \; : \mathcal{Z} \to \mathbb{P}^{1}$ is the Galois closure of the composite cover $\varphi\circ\psi \; $, with $\mathcal{G}$  the corresponding Galois group,
 $\mathcal{N} \leq \mathcal{H} \leq \mathcal{G}$ the (conjugacy classes of) subgroups
such that $\mathcal{Z}/\mathcal{N} = \mathcal{Y}$ and $\mathcal{Z}/\mathcal{H} = \mathcal{X}$.

We have shown that then $\mathcal{G}  =\mathcal{K} \rtimes \mathcal{U}$,
where in this case $\mathcal{K} = \mathcal{H}_{\mathcal{G}}\cong \mathbb{Z}_2^s$ with $0 \leq s \leq 6$, and $\mathcal{U}$ is a simple transitive permutation group of degree $7$.

\medskip

We exhibit here the variety of possible Galois groups $\mathcal{G}$ that appear associated to this situation, even in the simplest case, when we  assume that the cover $\mathcal{X} \xrightarrow {\quad \varphi} \mathbb{P}^{1}$ has precisely three branch points. Equivalently, we assume that  $ \;  \mathcal{G}  \; $ acts on $ \; {\mathcal Z} \; $  with signature $ \; (0; 7,7,7) \; $; let $\{ a,b,(ab)^{-1}\}$ denote a set of geometric generators for this action.

\medskip

So consider $ \; \mathcal{G}  = \langle a, b \rangle  \leq \mathbf{A}_{14} \; $ with  $ \; a := (1,2,3,4,5,6,7)(8,9,10,11,12,13,14) \; $ and
\vspace{1mm}\\
\begin{enumerate}
	
	\item $ \; b:= (1,2,3,11,12,6,14)(4,5,13,7,8,9,10)$.

	Then $ \; \vert ab \vert = 7 \; \; ; \; \; \vert \mathcal{G} \vert = 2^3\cdot 7  \; \; $ and $ \; \mathcal{G} \cong \mathbb{Z}_2^3 \rtimes \mathbb{Z}_7$.

	Let $ \; \mathcal{H} = \langle h_1, h_2, h_3 \rangle  \leq \mathcal{G} \; $ and $ \; \mathcal{N} = \langle h_1, h_2 \rangle \leq \mathcal{H}$, where
	$$h_1 = (3,10)(5,12)(6,13)(7,14) \; \; ; \; \;
	h_2 = (1,8)(3,10)(4,11)(5,12) \; \; ; \; \;
	h_3 = (2,9)(3,10)(4,11)(7,14).
	$$
	
	Then $ \; \mathcal{H} \unlhd \mathcal{G} \; ;  \; \mathcal{H} = \mathbb{Z}_2^3 \; \; ; \; \; \vert \mathcal{H} : \mathcal{N} \vert = 2 $;  $ \; \mathcal{N}_{\mathcal{G}} = \{ 1 \}$, and  $  \; \mathtt{g}_ {\mathcal Z} = 17$.
	
	\medskip

	\item $ \; b:= (1,2,3,4,5,6,14)(7,8,9,10,11,12,13). \; $ 
	
	Then $ \; \vert ab \vert = 7 \; \; ; \; \; \vert \mathcal{G} \vert = 2^6\cdot 7 \; \; $ and $ \; \mathcal{G} \cong \mathbb{Z}_2^6 \rtimes \mathbb{Z}_7$.

	Let $ \; \mathcal{H} = \langle h_1, h_2, h_3,h_4,h_5,h_6  \rangle  \leq \mathcal{G} \; $ and $ \; \mathcal{N} = \langle h_1, h_2,h_3,h_4,h_5 \rangle \leq \mathcal{H} \; $ where
	$$h_1 = (1,8)(2,9)\; ; \;
	h_2 = (2,9)(3,10) \; ; \;
	h_3 = (3,10)(4,11)\;  ;  $$
	$$ h_4=(4,11)(5,12) \; ; \; h_5=(5,12)(6,13) \; ; \;  h_6 = (6,13)(7,14)
	$$
	
	Then $ \; \mathcal{H} \unlhd \mathcal{G} \; ;  \; \mathcal{H} = \mathbb{Z}_2^6 \; \; ; \; \; \vert \mathcal{H} : \mathcal{N} \vert = 2 \ $, $ \; \mathcal{N}_\mathcal{G} = \{ 1 \}$ and $  \;   \; \mathtt{g}_ {\mathcal Z} = 129$.
	
	\medskip
	
	Examples (1) and (2) correspond to the cases where the cover $\mathcal{X} \xrightarrow {\quad \varphi} \mathbb{P}^{1}$ is cyclic; that is, case i) in the Theorem. 
	
	\medskip
	
	\item $ \; b:= (1,2,3,4,6,14,5)(7,12,8,9,10,11,13)$.
	
	Then $ \; \vert ab \vert = 7 \; \; ; \; \; \vert \mathcal{G} \vert = 2^6\cdot \frac{7!}{2} \; \; $ and $ \; \mathcal{G} \cong \mathbb{Z}_2^6 \rtimes \mathbf{A}_7$.

	Let $ \; \mathcal{H} = \langle h_1, h_2, h_3  \rangle  \leq \mathcal{G} \; $ and $ \; \mathcal{N} = \langle n_1, n_2,n_3 \rangle \leq \mathcal{H} \; $ where
	$$h_1 = (1,2,3,4,5)(6,13)(7,14)(8,9,10,11,12) \; ; \;
	h_2 = (4,5,13)(6,11,12) \; ; \;
	h_3 = (2,9)(7,14) $$
	$$
	n_1 = (1,8)(2,9)(3,10)(4,11)\; \; ; \; \;
	n_2 = (1,5,4,3,2)(8,12,11,10,9) \; \; ; \; \;
	n_3 = (1,5,4,13,2)(6,9,8,12,11).
	$$
	
	Then $ \; \vert \mathcal{G} : \mathcal{H} \vert = 7 \; ;  \; \mathcal{H}_\mathcal{G} = \mathbb{Z}_2^6 \; \; ; \; \; \vert \mathcal{H} : \mathcal{N} \vert = 2 \ $ and $ \; \mathcal{N}_\mathcal{G} = \{ 1 \}$.
	
	Also, for $ \; \mathcal{U} = \langle u_1, u_2 \rangle \leq \mathcal{G} \; $ with
	$$ u_1=(1,2,3,4,5,13,7)(6,14,8,9,10,11,12) \; \; ; \; \;  u_2=(1,2,3,4,13,7,5)(6,14,12,8,9,10,11) $$
	we have $ \; \mathcal{U}  \cong \mathbf{A}_7$, and $  \;   \; \mathtt{g}_ {\mathcal Z} =  46081$.

     \medskip
     
    This example corresponds to case ii)a) in the Theorem.

	\medskip
	
	\item $ \; b:=(1,2,12,14,10,6,11)(3,13,4,8,9,5,7)$.
	
	Then $ \; \vert ab \vert = 7 \; \; ; \; \; \vert \mathcal{G} \vert = 2^3\cdot 168 \; \; $ and $ \; \mathcal{G} \cong \mathbb{Z}_2^3 \rtimes \mathbf{PSL}(3,2).$
	
	Let $ \; \mathcal{H} = \langle h_1, h_2, h_3, h_4  \rangle  \leq \mathcal{G} \; $ and $ \; \mathcal{N} = \langle n_1, n_2,n_3 \rangle \leq \mathcal{H} \; $ where
	$$h_1 = (2,7,3)(4,6,5)(9,14,10)(11,13,12) \; ; \;
	h_2 = (2,3,6,4)(5,7)(9,10,13,11)(12,14)\; ; \; $$
	$$h_3 = (3,10)(4,11)(5,12)(7,14) \; \; ; \; \; h_4= (1,8)(4,11)(5,12)(6,13);$$
	
	$$
	n_1=(1,8)(2,7)(4,11)(5,13)(6,12)(9,14); \;
	n_2=(2,7,3)(4,6,5)(9,14,10)(11,13,12); \;
	$$
	$$n_3=(2,13)(5,14)(6,9)(7,12).
	$$

	\medskip
	
	Then $ \; \vert \mathcal{G} : \mathcal{H} \vert = 7 \; ;  \; \mathcal{H}_\mathcal{G} = \mathbb{Z}_2^3 \; \; ; \; \; \vert \mathcal{H} : \mathcal{N} \vert = 2 \ $ and $ \; \mathcal{N}_\mathcal{G} = \{ 1 \}. $
	
	Also, for $ \mathcal{U} = \langle  a, u \rangle \leq \mathcal{G} \; $ with
	$  u =(1,2,5,7,3,6,4)(8,9,12,14,10,13,11) \; $
	we have $ \; \mathcal{U}  = \mathbf{PSL}(3,2)$ and $  \; \mathtt{g}_ {\mathcal Z} =  385$.
	
	Observe that $\mathcal{H}_\mathcal{G} \neq \{1\}$ and $\mathcal{N} \mathcal{U} = \mathcal{U} \mathcal{N}$, and therefore this example illustrates case ii)h) in the Theorem.
	
	\medskip

  \item $b=(1,2,12,14,10,6,11)(3,13,4,8,9,5,7)$.

Then $ \; \vert ab \vert = 7 \; \; ; \; \; \vert \mathcal{G} \vert = 2^3\cdot 168 \; \; $ and $ \; \mathcal{G} = \mathbb{Z}_2^3 \rtimes \mathbf{PSL}(3,2)$.

Let $ \; \mathcal{H}   = \langle h_1,h_2,h_3 \rangle \; $  and $ \; \mathcal{N} = \langle n_1, n_2,n_3 \rangle \leq \mathcal{H} \; $  with
$$h_1=(1,8)(2,9)(4,11)(7,14); \;  \;
h_2=(2,7,3)(4,6,5)(9,14,10)(11,13,12); 
$$
$$h_3=(2,3,6,4)(5,7)(9,10,13,11)(12,14)
$$
$$
n_1=(3,5)(4,7)(10,12)(11,14); \; \;
n_2=(2,10,5)(3,12,9)(4,14,13)(6,11,7);
$$
$$n_3=(2,6)(3,11)(4,10)(5,12)(7,14)(9,13)$$

and $ \; \mathcal{ U} = \langle a, u \rangle \; $ where
 \; $u=(1,2,5,7,3,6,4)(8,9,12,14,10,13,11). \; $

Then $ \; \vert  \mathcal{G} : \mathcal{H} \vert = 7   \; \; \; ; \; \; \mathcal{K} = \mathcal{H}_{ \mathcal{G}}  \cong {\mathbb{Z}_2}^{3}\ \; \;  ; \; \mathcal{U} \cong \mathbf{PSL}(3,2)$, and $\;   \; \mathtt{g}_ {\mathcal Z} = 385$.

Furthermore $ \; \vert \mathcal{H} : \mathcal{N} \vert = 2 \; \; ;  \; \;  \mathcal{N}_{ \mathcal{G}} = \{1\} \; \; $ and $ \;  \mathcal{U} \cap \mathcal{N} = \mathcal{U} \cap\mathcal{H} \cong \mathbf{S}_4$. Note that $ \; \mathcal{N} \; $ does not permute with $ \; \mathcal{U}$, and thus this example illustrates case ii)e) in the Theorem, with $s=3$.

   \medskip

	\item $ \; b=(1,2,5,7,10,13,4)(3,6,11,8,9,12,14)$.
	
	Then $ \; \vert ab \vert = 7 \; \; ; \; \; \vert \mathcal{G} \vert = 2^6\cdot 168 \; \; $ and $ \; \mathcal{G} = \mathbb{Z}_2^6 \rtimes \mathbf{PSL}(3,2).$

    Let $ \; \mathcal{H} = \langle h_1, h_2, h_3, h_4  \rangle  \leq \mathcal{G} \; $ and $ \; \mathcal{N} = \langle n_1, n_2,n_3, \rangle \leq \mathcal{H} \; $ where
	$$h_1 = (2,14,10)(3,9,7)(4,13,5)(6,12,11) \; ; \;
	h_2 = (2,10,6,11)(3,13,4,9)(5,14)(7,12)\; ; \; $$
	$$h_3 = (6,13)(7,14) \; \; ; \; \; h_4= (1,8)(4,11)(6,13)(7,14); \; \; n_1 = (2,11,9,4)(3,13,10,6);$$
	$$
	n_2 = (2,14,10)(3,9,7)(4,13,5)(6,12,11) \; \; ; \; \;
	n_3 =(3,4,10,11)(5,7)(6,13)(12,14).
	$$

	Then $ \; \vert \mathcal{G} : \mathcal{H} \vert = 7 \; ;  \; \mathcal{H}_\mathcal{G} = \mathbb{Z}_2^6 \; \; ; \; \; \vert \mathcal{H} : \mathcal{N} \vert = 2 \ $ and $ \; \mathcal{N}_\mathcal{G} = \{ 1 \}. $

	Also, for $ \;  \mathcal{U}  = \langle a, u \rangle \leq \mathcal{G} \; $ with
	$  u =(1,2,5,7,3,6,4)(8,9,12,14,10,13,11) \; $
	we have $ \;  \mathcal{U}  = \mathbf{PSL}(3,2)$ and $  \;   \; \mathtt{g}_ {\mathcal Z}=  3073$.
	
    \medskip
    
    This example illustrates case ii)e) in the Theorem with $s=6$.

    \medskip

	\item $ \; b:=  (1,2,5,7,3,6,4)(8,9,12,14,10,13,11)\; $ \vspace{2mm}\\
	Then $ \; \vert ab \vert = 7 \; \; ; \; \; \vert \mathcal{G} \vert = 168 \; \; $ and $ \; \mathcal{G} =   \mathbf{PSL}(3,2).$

	Let $ \; \mathcal{H} = \langle h_1, h_2 \rangle  \leq \mathcal{G} \; $ and $ \; \mathcal{N} = \langle n_1, n_2 \rangle \leq \mathcal{H} \; $ where
	$$h_1 = (1,2,6)(4,7,5)(8,9,13)(11,14,12) \; ; \;
	h_2 = (2,6)(3,7,4,5)(9,13)(10,14,11,12) $$
	$$ n_1 = (1,2,6)(4,7,5)(8,9,13)(11,14,12)\; \; ; \; \;
	n_2 = (3,5)(4,7)(10,12)(11,14)
	$$
	Then $ \; \vert \mathcal{G} : \mathcal{H} \vert = 7 \; ;  \; \mathcal{H}_\mathcal{G} = \{1\}$, so $\mathcal{G} =\mathcal{U}$,  $\vert \mathcal{H} : \mathcal{N} \vert = 2$, $ \mathcal{N}_\mathcal{G} = \{ 1 \}$ and $  \;   \; \mathtt{g}_ {\mathcal Z} = 49$.

    \medskip
    
    This example illustrates case ii)h) in the Theorem with $\mathcal{H}_\mathcal{G} = \{1\}$;, or, equivalently, with $s=0$.
\end{enumerate}

\end{document}